\pgfplotsset{compat=1.16, width=.7\textwidth}
\newcommand{\norm}[1]{\lVert #1 \rVert}
\newcommand{\R}{\ensuremath{\mathbb{R}}}
\newcommand{\C}{\ensuremath{\mathbb{C}}}
\newcommand{\rat}{\mathcal{Q}}
\newcommand{\pol}{\ensuremath{\mathbb{P}}}
\newcommand{\T}{\ensuremath{\mathcal{T}}}
\newcommand{\blkdiag}{\mathrm{blkdiag}}
\theoremstyle{plain}
\newtheorem{lemma}{Lemma}[section]
\newtheorem{theorem}[lemma]{Theorem}
\newtheorem{proposition}[lemma]{Proposition}
\newtheorem{remark}[lemma]{Remark}
\newtheorem{definition}[lemma]{Definition}
\newcommand{\CKR}{{\tt TelFun}\xspace}
\newcommand{\CKM}{{\tt CKM}\xspace}
\renewcommand{\vec}[1]{\boldsymbol{#1}}		
 \author{
     Angelo A. Casulli\thanks{
         Scuola Normale Superiore, Pisa, Italy
         (\texttt{angelo.casulli@sns.it}).
     }\and
     Daniel Kressner\thanks{
         Institute of Mathematics, EPFL Lausanne, Lausanne, Switzerland
         (\texttt{daniel.kressner@epfl.ch}).
     }
     \and
     Leonardo Robol\thanks{
        Department of Mathematics, 
        University of Pisa, Pisa, Italy
        (\texttt{leonardo.robol@unipi.it})
     }
 }
\title{Computing Functions of \\ Symmetric Hierarchically Semiseparable Matrices}
\begin{document}
    \maketitle 

    \begin{abstract}
The aim of this work is to develop a fast algorithm for approximating the matrix function $f(A)$ of a square matrix $A$ that is symmetric and has hierarchically semiseparable (HSS) structure.  Appearing in a wide variety of applications, often in the context of discretized (fractional) differential and integral operators, HSS matrices have a number of attractive properties facilitating the development of fast algorithms. In this work, we use an unconventional telescopic decomposition of $A$, inspired by recent work of Levitt and Martinsson on approximating an HSS matrix from matrix-vector products with a few random vectors. This telescopic decomposition allows us to approximate $f(A)$ by recursively performing low-rank updates with rational Krylov subspaces while keeping the size of the matrices involved in the rational Krylov subspaces small. In particular, no large-scale linear system needs to be solved, which yields favorable complexity estimates and reduced execution times compared to existing methods, including an existing divide-and-conquer strategy. The advantages of our newly proposed algorithms are demonstrated for a number of examples from the literature, featuring the exponential, the inverse square root, and the sign function of a matrix.
Even for matrix inversion, our algorithm exhibits superior performance, even if not specifically designed for this task.
    \end{abstract}

\section{Introduction}
Consider a symmetric matrix $A\in\R^{n\times n}$ with spectral decomposition $A=V\Lambda V^T$, with the orthogonal matrix $V$ and the diagonal matrix $\Lambda=\mathrm{diag}(\lambda_1,\dots,\lambda_n)$ containing the eigenvalues of $A$. Given a scalar function $f$ well defined on the eigenvalues of $A$, the matrix function $f(A) \in\R^{n\times n}$ is defined as $Vf(\Lambda)V^T$, where $f(\Lambda):=\mathrm{diag}(f(\lambda_1),\dots,f(\lambda_n))$. Popular examples include the matrix inverse, the matrix exponential, the sign function, and the (inverse) matrix square root; see the monograph~\cite{higham2008functions} for an overview. When $A$ is of moderate size, $f(A)$ can simply be computed according to this definition, via computing the spectral decomposition of $A$, or using a more specialized algorithm such as the scaling-and-squaring method for the matrix exponential~\cite{higham2005scaling}. These methods typically require $\mathcal O(n^2)$ memory and $\mathcal O(n^3)$ operations, and thus become infeasible for larger $n$.
If only the computation of $f(A)B$ for a (block) vector $B$ is needed, (rational) Krylov subspace methods are well suited when $A$ is large and (data) sparse; see~\cite{guttel2010rational} and the references therein.

The task of approximating the whole matrix function $f(A)$ for a large-scale matrix $A$ is rather challenging and certainly requires additional assumptions on the data sparsity structure of $A$. For example, if $A$ is a banded matrix \emph{and} $f$ can be well approximated by a low-degree polynomial on the spectrum of $A$, then $f(A)$ can also be well approximated by a banded matrix~\cite{benzi2015decay}, leading to fast algorithms, such as the ones described in~\cite{cortinovis2022divide,frommer2021analysis, park2023approximating}. 
If, on the other hand, $f$ does not admit good polynomial approximations then $f(A)$ usually does not admit a good approximation by a banded or, more generally, by a sparse matrix even when $A$ is banded. Examples include the (inverse) square root or the sign function when $A$ has eigenvalues that are close to zero relative to the width of the spectrum. For these examples, $f$ still admits good \emph{rational} approximations and the approximation of $f(A)$ can potentially be addressed using hierarchical low-rank techniques~\cite{hackbusch2015hierarchical}.

A matrix $A$ is said to be \emph{hierarchically off-diagonal low-rank} (HODLR) if it can be recursively block partitioned in a matrix with low-rank off-diagonal blocks, more specifically, there exists a block partitioning
\begin{equation*}
    A=\begin{bmatrix}
        A_{11} & A_{12}\\ A_{21} & A_{22}
    \end{bmatrix}  
\end{equation*} 
such that $A_{12}$ and $A_{21}$ are of low rank and $A_{11}$ and $A_{12}$ are square matrices that can be recursively partitioned in the same way (until a minimal size of the diagonal blocks is reached). \emph{Hierarchically semiseparable} (HSS) matrices additionally impose that the low-rank factors representing the off-diagonal blocks on the different levels of the recursion are nested; see Section~\ref{sec:HSS} for the precise definition. Hierarchical matrices, such as HODLR and HSS, admit data-sparse representations and cover a wide variety of matrix structures, including banded matrices and rational functions thereof. In particular, they can be used to approximate $f(A)$ when $A$ is banded or, more generally, HSS whenever $f$ admits a good rational approximation~\cite{cortinovis2022divide}. This property has been exploited to develop iterate-and-truncate methods using hierarchical matrices~\cite{Grasedyck2003,KressnerSus2017} as well as a divide-and-conquer procedure based on low-rank updates and rational Krylov subspaces~\cite{cortinovis2022divide}. While numerical experiments in~\cite{cortinovis2022divide} show that the latter method is often preferable in terms of efficiency, it only exploits HODLR structure even when the matrix is HSS, and therefore does not fully benefit from the nestedness of low-rank factors in the HSS format. 

To fully benefit from HSS structure, we will represent an HSS matrix in an unconventional way, via a telescopic decomposition. Such a decomposition was used by Levitt and Martinsson~\cite{levitt2022linear} to compute an HSS approximation of a matrix $A$ from a few random matrix-vector products.\footnote{It is worth mentioning that another method for approximating HSS matrices from random matrix-vector products was recently presented by Halikias and Townsend~\cite{halikiasstructured}, but this algorithm is not based on the telescopic decompositions considered in this work.} 
For the purpose of this work, we consider a more general \emph{telescopic decomposition} (see Section~\ref{sec:telescopic-decomposition}), based on representing an HSS matrix $A$ as 
\begin{equation}\label{eqn:telesc_intro}
    A=\vec U\tilde{A}\vec V^T + \vec D,
\end{equation}
where $\vec U$ and $\vec V$ are block diagonal, orthonormal matrices with tall and skinny diagonal blocks, $\vec D$ is a block diagonal matrix and $\tilde{A}$ is a (smaller) square matrix recursively decomposed in the same fashion. This representation is not unique and includes the one from~\cite{levitt2022linear}. We also show how to convert such telescopic decompositions into the standard representation of HSS matrices.

The main contribution of this work is to design a new algorithm that returns a telescopic decomposition for an HSS approximation of $f(A)$, where $A$ is a symmetric HSS matrix and $f$ admits a good rational approximation. Unlike the divide-and-conquer method described in \cite{cortinovis2022divide}, our algorithm fully exploits the nestedness relations of the HSS format and, in turn, only needs to build rational Krylov subspaces for small-sized matrices. This translates into reduced complexity and, in many cases, significantly lower execution times.
Using telescopic decompositions, our new algorithm combines well with the method by Levitt and Martinsson. This combination allows one to extract an approximation to $f(A)$ from the product of $A$ with a few random vectors.

The remainder of the paper is structured as follows, In Section~\ref{sec:Krylov}, we briefly recall block rational Krylov subspaces. While the usual data-sparse representation of HSS matrices is described in Section~\ref{sec:HSS}, Section~\ref{sec:telescopic-decomposition} introduces a general concept of telescopic decompositions and shows how to convert between different forms of telescopic decompositions and the usual data-sparse representation of HSS matrices.  In Section~\ref{sec:comp-fun}, our newly proposed algorithm for approximating matrix functions is described and its convergence is analyzed. Section~\ref{sec:pole_selection} recalls existing rational approximation results, which yield favorable complexity estimates for our algorithm in important special cases. This translates into superior efficiency, as demonstrated with several numerical examples in Section~\ref{sec:Numerical_exp}.

Note that for most of Sections~\ref{sec:Krylov} to~\ref{sec:telescopic-decomposition}, we will not assume that $A$ is symmetric. Imposing symmetry would not simplify the exposition and our considerations on telescopic decompositions in Section~\ref{sec:telescopic-decomposition} might be of independent interest. In Section~\ref{sec:symmetric}, we will discuss the consequences of $A$ being symmetric.
    
    \section{Rational Krylov subspaces}\label{sec:Krylov}
    
   We briefly recall the definition of (block) rational Krylov subspaces; see~\cite{elsworth2020block} for more details.
    \begin{definition} \label{def:ratkrylov}
        Let $A\in \R^{n\times n}$, $B\in \R^{n\times b}$ and let $\vec{\xi}_k = ( \xi_0,\dots,\xi_{k-1})$, with $\xi_j \in \C\cup\{\infty\}$, be a list of poles that are not eigenvalues of $A$. Then the associated rational Krylov subspace is defined as
        \begin{equation*}
            \rat(A,B,\vec \xi_k):=\Big\{q(A)^{-1}\sum_{j=0}^{k-1}A^jBC_j,\quad  \text { with } C_j\in\R^{b\times b}\Big\},
        \end{equation*}  
        where 
        \begin{equation*}q(x):=\prod_{\xi\in \vec \xi_k, \xi\neq \infty}(x-\xi).
        \end{equation*}
\end{definition}
An orthonormal basis of a rational Krylov subspace can be computed by the \emph{rational Arnoldi method} \cite[Algorithm~2.1]{elsworth2020block}. During the procedure, shifted linear systems with the matrices\footnote{The linear system is replaced by a matrix product with $A$ if $\xi_j = \infty$.} $A-\xi_j I$ need to be solved and this can become computationally expensive for a large-scale matrix $A$. One novelty of this work is to employ rational Krylov subspaces associated with small-size matrices for the computation of large-scale matrix functions.

Proposition~\ref{prop:blkdiag_krylov} below is a simple but key result for the development of this paper. Given a family of matrices $(C_i \in \R^{m_i \times n_i})$, we let $\vec C =\blkdiag(C_i)$ denote the block diagonal matrix containing the matrices $C_i$ as diagonal blocks (in their natural order).
\begin{proposition}\label{prop:blkdiag_krylov}
  Let $\vec A:=\blkdiag(A_i)$ and $\vec B:=\blkdiag(B_i)$ with $A_i\in \R^{n_i\times n_i}$ and $B_i\in \R^{n_i\times b_i}$ and let $\vec \xi_k$ be a list of poles. Then an orthonormal basis of $\rat(\vec A,\vec B,\vec \xi_k)$ is given by 
$
    \vec U:=\blkdiag (U_i),
$
where $U_i$ is an orthonormal basis of $\rat(A_i,B_i,\vec \xi_k)$.
\end{proposition}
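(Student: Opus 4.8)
The plan is to exploit the fact that all the defining operations of a rational Krylov subspace---powers of the matrix, application of the block coefficient matrices $C_j$, and solution of shifted linear systems---respect block diagonal structure. First I would observe that if $\vec A = \blkdiag(A_i)$ then $\vec A^j = \blkdiag(A_i^j)$ for every $j\ge 0$, and more generally, since the polynomial $q$ from \Cref{def:ratkrylov} factors over the poles, $q(\vec A) = \blkdiag(q(A_i))$; because none of the poles $\xi$ is an eigenvalue of any $A_i$ (they are not eigenvalues of $\vec A$, whose spectrum is the union of the spectra of the $A_i$), each $q(A_i)$ is invertible and $q(\vec A)^{-1} = \blkdiag(q(A_i)^{-1})$. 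Next I would note that a generic element of $\rat(\vec A, \vec B, \vec\xi_k)$ has the form $q(\vec A)^{-1}\sum_{j=0}^{k-1}\vec A^j \vec B\, C_j$; if we partition each $C_j$ into blocks $C_j^{(il)}$ conformally with the block rows/columns induced by the $A_i$, then the $i$-th block row of this element is $q(A_i)^{-1}\sum_{j=0}^{k-1} A_i^j B_i\, C_j^{(ii)}$ plus contributions $q(A_i)^{-1}\sum_j A_i^j B_i\, C_j^{(il)}$ for $l\neq i$ --- but these latter contributions land in the same space $\rat(A_i,B_i,\vec\xi_k)$, so each block row of any element of $\rat(\vec A,\vec B,\vec\xi_k)$ lies in $\rat(A_i,B_i,\vec\xi_k)$. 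Conversely, choosing the $C_j$ to be block diagonal recovers every tuple of elements of the individual $\rat(A_i,B_i,\vec\xi_k)$, so
\begin{equation*}
    \rat(\vec A, \vec B, \vec\xi_k) = \rat(A_1,B_1,\vec\xi_k)\oplus\cdots\oplus\rat(A_p,B_p,\vec\xi_k),
\end{equation*}
the external direct sum, realized as stacked block vectors.

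Given this direct-sum decomposition, the claim about the orthonormal basis is almost immediate: if $U_i$ has orthonormal columns spanning $\rat(A_i,B_i,\vec\xi_k)$, then $\vec U = \blkdiag(U_i)$ has columns that span the direct sum, and $\vec U^T\vec U = \blkdiag(U_i^T U_i) = \blkdiag(I) = I$, so $\vec U$ is orthonormal. Hence $\vec U$ is an orthonormal basis of $\rat(\vec A,\vec B,\vec\xi_k)$.

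The only genuinely delicate point is the bookkeeping in the forward inclusion: one must be careful that an arbitrary element of $\rat(\vec A,\vec B,\vec\xi_k)$ uses a single shared list of coefficient matrices $C_j$ that are \emph{not} assumed block diagonal, and argue that its off-diagonal cross-terms do not enlarge the span of any individual block row beyond $\rat(A_i,B_i,\vec\xi_k)$. This is precisely where the definition's use of full (rather than block diagonal) $C_j$ matters, and it is the step I would write out most carefully; everything else is routine manipulation of block diagonal matrices. A subtle but harmless issue is that the individual subspaces $\rat(A_i,B_i,\vec\xi_k)$ may have different dimensions (e.g.\ if $b_i$ differ or if breakdowns occur), so the blocks of $\vec U$ need not have equal column counts --- but since we only claim that $\vec U$ is \emph{an} orthonormal basis and never that it has a prescribed size, this causes no difficulty.
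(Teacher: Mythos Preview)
Your proposal is correct and follows essentially the same approach as the paper: write a generic element of $\rat(\vec A,\vec B,\vec\xi_k)$ as $\sum_j \blkdiag\big(q(A_i)^{-1}A_i^j B_i\big)\,C_j$, observe that each block row lands in $\rat(A_i,B_i,\vec\xi_k)$, and conclude that $\blkdiag(U_i)$ spans the space. The paper compresses this into two lines and omits the explicit converse inclusion and orthonormality check that you spell out, but the underlying argument is identical; the ``delicate'' cross-term bookkeeping you flag is in fact immediate once the block-diagonal factorization is written down, which is why the paper does not dwell on it.
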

\begin{proof}
By Definition~\ref{def:ratkrylov}, every element of $\rat(\vec A, \vec B, \vec \xi_k)$ takes the form
\begin{equation*}
    q(\vec A)^{-1}\sum_{j=0}^{k-1}\vec A^j \vec BC_j=\sum_{j=0}^{k-1}\blkdiag\big(q(A_i)^{-1} A_i^j B_i\big)C_j.
\end{equation*}
Because $q(A_i)^{-1} A_i^j B_i\in \rat(A_i,B_i,\vec \xi_k)$ for every $i, j$, it follows that $\rat(\vec A, \vec B, \vec \xi_k)=\mathrm{span}(\blkdiag(U_i)).$ 
\end{proof}
 
   \section{Hierarchically semiseparable matrices}\label{sec:HSS}
    
    To define hierarchically semiseparable (HSS) matrices we need to introduce a way to recursively split row and column indices of a matrix. Given a vector of indices $I=[1,2,\dots,n],$ we use a perfect binary tree $\T,$ called $\emph{cluster tree}$, to define subsets of indices obtained by subdividing $I$. The root $\gamma$ of the tree is associated with the full vector $I$; the rest of the tree is recursively defined in the following way: given a non-leaf node $\tau$ associated with an index vector $I_{\tau}$, its children $\alpha,\beta$, are associated with two vectors of consecutive indices $I_{\alpha},I_{\beta}$, such that $I_{\tau}$ is the concatenation of $I_\alpha$ and $I_\beta$. The depth of a node is defined as the distance from the root of the tree. The depth of the tree is denoted by $L$. We observe that $\T$ is uniquely defined by the index vectors associated with the nodes and, hence, cluster trees can be defined by simply specifying the indices associated with the leaf nodes.
    For each leaf node $\alpha$, we use $|\alpha|$ to denote the length of the vector of indices associated with $\alpha$. We assume that the leaf nodes are ``small'', that is, for some prescribed \emph{threshold size} $t$ it holds that $|\alpha| \le t$ for every leaf $\alpha$.

    Given a matrix $A\in \R^{n \times n}$, we let $A_{\tau,\tau'}$ denote the submatrix of $A$ obtained 
    by selecting the row and column indices associated with nodes $\tau$ and $\tau'$, respectively. In particular, we will consider diagonal blocks ($\tau = \tau'$) and sub/supdiagonal blocks ($\tau$ and $\tau'$ are siblings, i.e., children of the same node).  We are now ready to state the definition of an HSS matrix following \cite[Section~3.3]{martinsson2011fast}.

    \begin{definition}\label{def:HSS}
    Given a cluster tree $\mathcal{T}$ for the indices $[1,\dots,n]$,
        a matrix $A\in \R^{n\times n}$ is called an HSS matrix of HSS rank $r$ if:
        \begin{enumerate}       
        \item for each pair of sibling nodes $\tau, \tau' \in \mathcal{T}$, there exist matrices $U^{(\mathrm{big})}_{\tau} \in \R^{|\tau|\times r}$, $V^{(\mathrm{big})}_{\tau'} \in \R^{|\tau'|\times r}$ with orthonormal columns and $\tilde{A}_{\tau,\tau'}\in\R^{r\times r}$, such that 
        \begin{equation*}
            A_{\tau,\tau'}=U^{(\mathrm{big})}_{\tau}\tilde{A}_{\tau,\tau'}(V^{(\mathrm{big})}_{\tau'})^T.
        \end{equation*}
        \item for each non-leaf node $\tau \in \mathcal{T}$ with children $\alpha$ and $\beta$ there exist $U_{\tau},V_{\tau}\in \R^{2r\times r}$ with orthonormal columns, such that
        \begin{equation} \label{eq:recU}
            U_{\tau}^{(\mathrm{big})}=\begin{bmatrix}
                U_{\alpha}^{(\mathrm{big})}&\\
                &U_{\beta}^{{(\mathrm{big})}}
            \end{bmatrix}U_{\tau} \qquad \, \mathrm{ and } \, \qquad  V_{\tau}^{{(\mathrm{big})}}=\begin{bmatrix}
                V_{\alpha}^{{(\mathrm{big})}}&\\
                &V_{\beta}^{{(\mathrm{big})}}
            \end{bmatrix}V_{\tau}.
        \end{equation} 
    \end{enumerate}
    \end{definition}

    \begin{remark} \label{remark:HSSranks}
    Point 1 of Definition~\ref{def:HSS} implies that every sub/supdiagonal block $A_{\tau,\tau'}$ has rank at most $r$. To optimize storage, one could allow for different values of $r$ for different $\tau,\tau'$. To simplify the description, we will work with a constant rank bound $r$ in this paper. On the other hand, our software implementation allows for non-constant ranks.
    \end{remark}

    Definition~\ref{def:HSS} corresponds to the usual way of storing an HSS matrix $A$~\cite{xia2010fast,massei2020hm}. For each leaf node $\tau$, the matrices $U_{\tau}:=U_{\tau}^{\mathrm{(big)}}$, $V_{\tau}:=V_{\tau}^{\mathrm{(big)}}$ and for each non-leaf node $\tau$ the matrices $U_{\tau}$, $V_{\tau}$ from~\eqref{eq:recU} are stored.  The latter pair of matrices are usually called \emph{translation operators}. The
    nestedness relation~\eqref{eq:recU} allows us to recursively recover $U_{\alpha}^{\mathrm{(big)}}$ and $V_{\alpha}^{\mathrm{(big)}}$ for any $\alpha \in \mathcal T$ from the (small) matrices $U_{\tau}$, $V_{\tau}$. We only need to additionally store the $r\times r$ matrices $\tilde A_{\tau,\tau'}$ for every pair of sibling nodes $\tau,\tau'$ and the small diagonal blocks $A_{\tau,\tau}$ (of size at most $t$) for every leaf $\tau$ in order to recover the whole matrix $A$. To see this, let $\alpha$ and $\beta$ denote the children of the root $\gamma$. Then 
    \begin{equation} \label{eqn:hssrecursion}
        A=A_{\gamma,\gamma}=\begin{bmatrix}
            A_{\alpha,\alpha} & U^{(\mathrm{big})}_{\alpha}\tilde{A}_{\alpha,\beta}(V^{(\mathrm{big})}_{\beta})^T\\
            U^{(\mathrm{big})}_{\beta}\tilde{A}_{\beta,\alpha}(V^{(\mathrm{big})}_{\alpha})^T,&A_{\beta,\beta}
        \end{bmatrix}.
    \end{equation}
    If $\alpha$ and $\beta$ are non-leaf nodes, the matrices $A_{\alpha,\alpha}$ and  $A_{\beta,\beta}$ can be recursively recovered in the same fashion. Otherwise, if $\alpha$ and $\beta$ are leaves, these matrices are stored explicitly. In summary, the matrices 
    \begin{equation}\label{eqn:data-sparse}
    \{U_{\tau},V_{\tau}: \tau \in \mathcal T\}, \quad \{\tilde A_{\tau,\tau'}:\tau,\tau' \, \mathrm{sibling}\, \mathrm{nodes}\},  \quad  \{A_{\alpha,\alpha}:\alpha \, \mathrm{leaf} \,\mathrm{node}\}
    \end{equation}
    define a data-sparse representation of an HSS matrix $A$.
    
\begin{remark} \label{remark:nonorthonormal}
     There is no need to impose orthonormality on $U_{\tau},V_{\tau}$ in the representation~\eqref{eqn:data-sparse}. The orthogonality properties required by Definition~\ref{def:HSS}.2 can always be ensured by the orthogonalization procedure described in~\cite[Section 4.2]{martinsson2011fast}, without changing the matrix $A$ represented by~\eqref{eqn:data-sparse} through the recursion~\eqref{eqn:hssrecursion}.
\end{remark}

\subsection{Block diagonal matrices and depth reduction} \label{sec:productHSS}

        To simplify notation, we make use of the following form of block diagonal matrices. Let $\{C_{\tau}\}_{\tau\in \mathcal{T}}$ be a set of matrices for a cluster tree $\T$ . Then
    \begin{equation}\label{eqn:def-blkdiag}
        \vec{C}^{(\ell)}:=\blkdiag(C_\tau: \, \tau \in \T,  \mathrm{depth}(\tau)=\ell)
    \end{equation} 
    denotes the block diagonal matrix with the diagonal blocks consisting of all matrices $C_\tau$ for which $\tau$ has depth equal to $\ell$.
    
    For example, given the data-sparse  representation~\eqref{eqn:data-sparse} of an HSS matrix $A$, the matrices $\vec{U}^{(L)}$ and $\vec{V}^{(L)}$ are block diagonal matrices containing the orthonormal matrices $U_{\alpha}$ and $V_{\alpha}$, respectively, for every leaf $\alpha \in \mathcal T$ as diagonal blocks.
When considering the product
    \begin{equation*}
        \hat A := \big( \vec{U}^{(L)} \big)^T A \vec{V}^{(L)}.
    \end{equation*}
    the representation~\eqref{eqn:data-sparse} is affected as follows:
\begin{equation} \label{eq:Ahatrepresentation}
\begin{array}{ll}
\{U_{\tau},V_{\tau}:\mathrm{depth}(\tau)\le L-1\}, \quad & 
\{U_{\alpha}^T
     U_{\alpha},V_{\alpha}^T
     V_{\alpha}: \alpha \, \mathrm{leaf} \,\mathrm{node}\}, \\
\{\tilde A_{\tau,\tau'}:\tau,\tau' \, \mathrm{sibling}\, \mathrm{nodes}\}, \quad  & \{A_{\alpha,\alpha}:\alpha \, \mathrm{leaf} \,\mathrm{node}\}.
\end{array}
\end{equation}
This representation allows us to reconstruct the matrix $\hat A$ by a recursion analogous to~\eqref{eqn:hssrecursion}. Since $U_\alpha^T U_{\alpha}=I$ and $V_\alpha^T V_{\alpha}=I$, the orthogonality  properties of Definition~\ref{def:HSS} are satisfied as well. However,
the size of $\hat A$ is $2^L m$, which is generally different from $n$ and requires the cluster tree $\mathcal T$ to be adjusted accordingly. 
\begin{definition}\label{def:balanced-tree}
Given integers $m$ and $L$, consider the index vector $[1,2,\dots, 2^L m]$. Then 
$\T_m^{(L)}$ denotes the corresponding \emph{balanced} cluster tree of depth $L$, having leaves associated with
$[(i-1)m+1,\dots,im ]$ for $i=1,\dots,2^L$.
\end{definition}
In particular, the matrix $\hat A$ defined above is an HSS matrix associated with the cluster tree $\T_m^{(L)}$. When dropping the leaves, one obtains the balanced cluster tree $\T_{2m}^{(L-1)}$ of depth $L-1$. The matrix $\hat A$ is still an HSS matrix associated with $\T_{2m}^{(L-1)}$ but its parametrization~\eqref{eq:Ahatrepresentation}

reduces to 
\[
    \{U_{\tau},V_{\tau}: \tau \in \T_{2m}^{(L-1)}\}, \quad \{\tilde A_{\tau,\tau'}:\tau,\tau' \, \mathrm{siblings}\, \mathrm{in} \, \T_{2m}^{(L-1)}\},  \quad  \{A_{\tau,\tau}:\alpha \, \mathrm{leaf} \,\mathrm{in} \, \T_{2m}^{(L-1)}\},
\]
which matches the form of~\eqref{eqn:data-sparse}.

        \section{Telescopic decompositions} \label{sec:telescopic-decomposition}
        
To develop a randomized algorithm for recovering an HSS matrix from matrix-vector products, Levitt and Martinsson~\cite{levitt2022linear} replaced the classical data-sparse representation~\eqref{eqn:data-sparse} with a certain type of telescopic decomposition. As we will see in Section~\ref{sec:from-hss-to-telescop}, the data-sparse representation~\eqref{eqn:data-sparse} is directly linked to a similar but different type of telescopic decomposition. In Section~\ref{sec:general-talescop}, we introduce a general class of telescopic decompositions that includes both types. We also discuss how to convert between these types of telescopic decompositions. 

        \subsection{A general telescopic decomposition}\label{sec:general-talescop}

         We start with a generalization of the construction from~\cite{levitt2022linear}, recursively defining a telescopic decomposition.
        \begin{definition}\label{def:telesc-decomp}
            Let $\mathcal{T}$ be a cluster tree of depth $L$ for the indices $[1,\dots,n]$. A matrix
            $A\in \R^{n\times n}$ is in \emph{telescopic decomposition} of telescopic rank $r$ if there are real matrices 
            \begin{equation*}
                \{U_{\tau},V_{\tau}:\tau\in \T, \, 1\le \mathrm{depth}(\tau) \}\quad \mathrm{and} \quad \{D_{\tau}:\tau\in \T\}
            \end{equation*}
            for brevity denoted by $\{U_{\tau},V_{\tau}, D_{\tau}\}_{\tau \in \T}$ or simply $\{U_{\tau},V_{\tau}, D_{\tau}\}$, with the following properties:
            \begin{enumerate}
            \item $D_\tau$ is of size $|\tau|\times |\tau|$ if $\mathrm{depth}(\tau)=L$ and $2r \times 2r$ otherwise;
            \item $U_\tau, V_\tau$ have orthonormal columns; they are of size $|\tau|\times r$ if $\mathrm{depth}(\tau)=L$ and $2r \times r$ otherwise;
            \item if $L=0$ (i.e., $\T$ consists only of the root $\gamma$) then $A = D_{\gamma}$;
\item if $L\ge 1$ then 
            \begin{equation}\label{eqn:telescop}
                A = \vec D^{(L)}+\vec U^{(L)}A^{(L-1)} (\vec V^{(L)})^T,
            \end{equation}
            where $\vec U^{(L)},\vec V^{(L)}$, $\vec D^{(L)}$ are the block diagonal matrices defined by $U_{\tau},V_{\tau}, D_\tau$
            as in~\eqref{eqn:def-blkdiag}, and the matrix 
            \begin{equation}\label{eqn:hatA}
                A^{(L-1)} := (\vec U^{(L)})^T(A-\vec D^{(L)}) \vec V^{(L)}
            \end{equation}
            has the telescopic decomposition $\{U_{\tau},V_{\tau}, D_{\tau}\}_{\tau \in \T_{2r}^{(L-1)}}$, where $\T_{2r}^{(L-1)}$ denotes a balanced cluster tree of depth $L-1$ (see Definition~\ref{def:balanced-tree}).
        \end{enumerate}
\end{definition}

As we will see in the following, Definition~\ref{def:telesc-decomp} offers significant freedom in the choice of diagonal blocks $D_{\tau}$, giving rise to different types of telescopic decompositions. 
A simple procedure to explicitly reconstruct the matrix $A$ from a telescopic decomposition is described in Algorithm~\ref{Alg:tel-to-full}.

\begin{algorithm}
    \begin{algorithmic}
        \Require{$\{U_{\tau},V_{\tau},D_{\tau}\}_{\tau\in \T}$ telescopic decomposition of $A$ for cluster tree $\T$ of depth $L$, }
        \Ensure{ $A$}
        \State $A\gets D_{\gamma}$ where $\gamma$ is the root of $\T$
    \For{$\ell = 1, \dots, L$} 
    \State $A\gets \vec D^{(\ell)}+\vec U^{(\ell)}A (\vec V^{(\ell)})^T$ \Comment{with $\vec D^{(\ell)},\vec U^{(\ell)}$, $\vec V^{(\ell)}$ defined as in~\eqref{eqn:def-blkdiag}}
    \EndFor  
\end{algorithmic}     

    \caption{Recovering $A$ from a telescopic decomposition.} \label{Alg:tel-to-full}
\end{algorithm}

    \subsection{From HSS matrices to telescopic decompositions}\label{sec:from-hss-to-telescop}

    The following proposition provides a construction that turns any HSS matrix into a telescopic decomposition of the same rank. 
    \begin{proposition}\label{prop:hss_to_tel}
    Let
    \[
    \{U_{\tau},V_{\tau},\tilde A_{\tau,\tau'}:\tau,\tau' \, \mathrm{sibling}\, \mathrm{nodes}\} \quad \mathrm{and} \quad  \{A_{\tau,\tau}:\tau \, \mathrm{leaf} \,\mathrm{node}\}
\]
be the data-sparse representation defining an HSS matrix $A$ of HSS rank $r$ for a cluster tree $\T$ of depth $L$.
For each node $\tau \in \T$, define
        \begin{equation*}
            D_{\tau}:=\begin{cases}
                A_{\tau,\tau} &\mathrm{if}\, \tau \, \mathrm{is\, a \, leaf \, node}\\
                \begin{bmatrix}
                    0 & \tilde A_{\alpha,\beta}\\\tilde A_{\beta,\alpha}&0
                \end{bmatrix} &\mathrm{ if } \, \tau \, \mathrm{has \, children\,} \alpha \mathrm{\, and \,}\beta.
            \end{cases}
        \end{equation*}
        Then $\{U_{\tau},V_{\tau},D_{\tau}\}$ is a telescopic decomposition of $A$ of telescopic rank $r$ associated with $\mathcal T$.
    \end{proposition}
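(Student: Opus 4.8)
The plan is to verify the three defining properties of a telescopic decomposition (Definition~\ref{def:telesc-decomp}) directly from the data-sparse HSS representation, proceeding by induction on the depth $L$ of the cluster tree $\mathcal T$. The size and orthonormality requirements (points 1 and 2 of the definition) are immediate: for a leaf $\tau$, $D_\tau = A_{\tau,\tau}$ is $|\tau|\times|\tau|$ and $U_\tau,V_\tau$ are the stored $|\tau|\times r$ factors with orthonormal columns; for a non-leaf $\tau$, $D_\tau = \bigl[\begin{smallmatrix}0 & \tilde A_{\alpha,\beta}\\ \tilde A_{\beta,\alpha} & 0\end{smallmatrix}\bigr]$ is $2r\times 2r$ and $U_\tau,V_\tau$ are the $2r\times r$ translation operators, which are orthonormal either by assumption in Definition~\ref{def:HSS}.2 or after the orthogonalization of Remark~\ref{remark:nonorthonormal}. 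So the real content is point~4, the identity $A = \vec D^{(L)} + \vec U^{(L)} A^{(L-1)} (\vec V^{(L)})^T$ together with the claim that $A^{(L-1)}$, as defined by~\eqref{eqn:hatA}, again has the telescopic decomposition given by the same $\{U_\tau,V_\tau,D_\tau\}$ restricted to the depth-reduced tree.

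First I would establish the factorization at the top level. Fix a leaf-level partition and write $A = \vec D^{(L)} + E$, where $\vec D^{(L)} = \blkdiag(A_{\alpha,\alpha}: \alpha \text{ leaf})$ collects the diagonal blocks; then $E$ is the matrix obtained from $A$ by zeroing out the leaf diagonal blocks. The key observation is that every off-diagonal block of $A$ at \emph{any} level lies in the column space of the appropriate $U^{(\mathrm{big})}$ and the row space of the appropriate $V^{(\mathrm{big})}$, and by the nestedness relation~\eqref{eq:recU} these big basis matrices factor through the block-diagonal leaf bases: $U^{(\mathrm{big})}_\tau = \vec U^{(L)}_{|\tau} (\text{product of translation operators})$, where $\vec U^{(L)}_{|\tau}$ denotes the block-diagonal piece of $\vec U^{(L)}$ supported on the leaves under $\tau$. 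Consequently $E = \vec U^{(L)} \bigl((\vec U^{(L)})^T E \vec V^{(L)}\bigr) (\vec V^{(L)})^T$, because multiplying $E$ on the left by $\vec U^{(L)}(\vec U^{(L)})^T$ and on the right by $\vec V^{(L)}(\vec V^{(L)})^T$ acts as the identity on each off-diagonal block (it is an orthogonal projection onto a space containing that block's column/row space) and kills nothing, while the leaf diagonal blocks of $E$ are already zero. Since $(\vec U^{(L)})^T E \vec V^{(L)} = (\vec U^{(L)})^T (A - \vec D^{(L)}) \vec V^{(L)} = A^{(L-1)}$, this gives exactly~\eqref{eqn:telescop}.

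Next I would identify $A^{(L-1)}$ with an HSS matrix on the reduced tree and invoke the induction hypothesis. Compressing each leaf index block to its $r$-dimensional coordinates via $U_\alpha, V_\alpha$ turns the depth-$L$ cluster tree into the balanced depth-$(L-1)$ tree $\mathcal T_{2r}^{(L-1)}$, whose new leaves correspond to the old parents-of-leaves and carry index blocks of size $2r$. I would check that $A^{(L-1)}$ is HSS for this tree with: (i) new leaf diagonal blocks equal to $(U_\alpha^T A_{\alpha,\alpha'} V_{\alpha'})$-type expressions assembled over the two children of each old parent — and here one must verify that the off-diagonal children-blocks $A_{\alpha,\alpha'} = U_\alpha \tilde A_{\alpha,\alpha'} V_{\alpha'}^T$ compress precisely to $\tilde A_{\alpha,\alpha'}$, so the new leaf block of $A^{(L-1)}$ is $\bigl[\begin{smallmatrix} B_{\alpha,\alpha} & \tilde A_{\alpha,\alpha'} \\ \tilde A_{\alpha',\alpha} & B_{\alpha',\alpha'}\end{smallmatrix}\bigr]$ where $B_{\alpha,\alpha}$ is the compressed diagonal block — but \emph{wait}: in the telescopic decomposition the leaf-level diagonal of $A^{(L-1)}$ is supposed to be $D_\tau$ for the old parent $\tau$, which by our definition has \emph{zero} diagonal children-blocks. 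This is resolved by noting that the compression kills the leaf diagonal: since $\vec D^{(L)}$ is subtracted \emph{before} compressing in~\eqref{eqn:hatA}, we have $(\vec U^{(L)})^T(A-\vec D^{(L)})\vec V^{(L)}$, and the $(\alpha,\alpha)$ block of $A - \vec D^{(L)}$ is zero, so $B_{\alpha,\alpha}=0$ and the new leaf diagonal is exactly $D_\tau$; (ii) the same translation operators $U_\tau, V_\tau$ and coupling matrices $\tilde A_{\tau,\tau'}$ for all higher nodes, unchanged, because the compression only touches the finest level. Thus $A^{(L-1)}$ is HSS of rank $r$ on $\mathcal T_{2r}^{(L-1)}$ with the data-sparse representation $\{U_\tau, V_\tau, \tilde A_{\tau,\tau'}, D_\tau(=\text{new leaf blocks})\}$ restricted to that tree, and the induction hypothesis yields that $\{U_\tau,V_\tau,D_\tau\}_{\tau\in\mathcal T_{2r}^{(L-1)}}$ is its telescopic decomposition. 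The base case $L=0$ is trivial since then $A = A_{\gamma,\gamma} = D_\gamma$.

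The main obstacle is the bookkeeping in the second step: one must carefully track how the "big" basis matrices factor through the leaf-level block-diagonal bases via the nestedness relation, and verify that left/right multiplication by $\vec U^{(L)}(\vec U^{(L)})^T$ and $\vec V^{(L)}(\vec V^{(L)})^T$ genuinely reproduces $E$ — i.e., that for a sibling pair $(\tau,\tau')$ at depth $\ell \le L$, the column space of $A_{\tau,\tau'}$ is contained in the column space of $\vec U^{(L)}$ restricted to the leaves below $\tau$, and that this restricted block is itself orthonormal so the projection is exact. This is precisely where the HSS (as opposed to merely HODLR) structure is used, and it is the crux of why the telescopic rank equals the HSS rank $r$ rather than growing with the level.
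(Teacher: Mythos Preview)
Your proposal is correct and follows the same overall strategy as the paper: induction on $L$, establishing the projection identity $A-\vec D^{(L)} = \vec U^{(L)}(\vec U^{(L)})^T(A-\vec D^{(L)})\vec V^{(L)}(\vec V^{(L)})^T$ via the nestedness relation~\eqref{eq:recU}, and then recognizing $A^{(L-1)}$ as an HSS matrix on $\mathcal T_{2r}^{(L-1)}$ with leaf diagonals exactly $D_\tau$ so that the induction hypothesis applies. The one organizational difference is in how the projection identity is obtained. The paper splits $A-\vec D^{(L)}$ into the top-level off-diagonal part $A-\blkdiag(A_{\alpha,\alpha},A_{\beta,\beta})$ (handled directly via $U^{(\mathrm{big})}_\alpha,U^{(\mathrm{big})}_\beta$) and the remaining block-diagonal part $\blkdiag(A_{\alpha,\alpha},A_{\beta,\beta})-\vec D^{(L)}$ (handled by invoking the induction hypothesis on the depth-$(L-1)$ HSS matrices $A_{\alpha,\alpha},A_{\beta,\beta}$). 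You instead argue the identity in one shot at the leaf-block level: each off-diagonal leaf block $A_{\alpha,\alpha'}$ is a subblock of some sibling block $A_{\tau,\tau'}$, whose row/column bases factor through $U_\alpha,V_{\alpha'}$ by repeated application of~\eqref{eq:recU}. Your route is slightly more elementary in that it avoids the extra appeal to the induction hypothesis, at the cost of having to reason about leaf blocks belonging to sibling pairs at arbitrary depth; the paper's recursive split keeps the argument local to the top two levels. Both are fine, and your identification of the ``main obstacle'' (verifying that the restricted leaf bases are orthonormal so the projection is exact) is precisely the point where the HSS nestedness is used.
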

\begin{proof}
    We proceed by induction on $L$. If $L = 0$, the tree only consists of the root $\gamma$ and the statement trivially holds because of $D_{\gamma}=A$.
    
    Let us now assume that $L\ge 1$ and consider the matrices $U^{(\mathrm{big})}_{\tau}$ and $V^{(\mathrm{big})}_{\tau}$ from Definition~\ref{def:HSS}. Because of the recursion~\eqref{eq:recU}, we have 
    \begin{equation*}
        \vec U^{(L)}(\vec U^{(L)})^T\begin{bmatrix}
             U^{(\mathrm{big})}_{\alpha}\\ & U^{(\mathrm{big})}_{\beta}\end{bmatrix}=\begin{bmatrix}
                U^{(\mathrm{big})}_{\alpha}\\ & U^{(\mathrm{big})}_{\beta}\end{bmatrix},
    \end{equation*}
                and
    \begin{equation*} \vec V^{(L)}(\vec V^{(L)})^T\begin{bmatrix}
                    V^{(\mathrm{big})}_{\alpha}\\ & V^{(\mathrm{big})}_{\beta}\end{bmatrix}=\begin{bmatrix}
                       V^{(\mathrm{big})}_{\alpha}\\ & V^{(\mathrm{big})}_{\beta}\end{bmatrix},
    \end{equation*}
    where $\alpha$, $\beta$ are the children of the root $\gamma$ and $\vec U^{(L)}$, $\vec V^{(L)}$ are the block diagonal matrices  employed in Definition~\ref{def:telesc-decomp}. Combined with the HSS recursion~\eqref{eqn:hssrecursion}, this shows that
    \begin{equation} \label{eq:aux1}
        A - \begin{bmatrix}
        A_{\alpha,\alpha} & \\
       &A_{\beta,\beta}
    \end{bmatrix} = \vec U^{(L)}(\vec U^{(L)})^T\left(A-\begin{bmatrix}
        A_{\alpha,\alpha} & \\
       &A_{\beta,\beta}
    \end{bmatrix}\right)\vec V^{(L)}(\vec V^{(L)})^T.
    \end{equation}
     Noting that $A_{\alpha,\alpha}$ and $A_{\beta,\beta}$ are HSS matrices associated with trees of depth $L-1$, we can apply induction to conclude that they are both in telescopic decomposition. This means that relations of the form~\eqref{eqn:telescop}--\eqref{eqn:hatA}  hold for both matrices or, equivalently,
     \begin{equation*}
        \begin{bmatrix}
            A_{\alpha,\alpha} & \\
           &A_{\beta,\beta}
        \end{bmatrix} - \vec D^{(L)} = \vec U^{(L)}(\vec U^{(L)})^T\left(\begin{bmatrix} A_{\alpha,\alpha}\\&A_{\beta,\beta} \end{bmatrix}-\vec D^{(L)}\right)\vec V^{(L)}(\vec V^{(L)})^T.
     \end{equation*}
     Adding this equation to~\eqref{eq:aux1} gives
     \begin{equation*} \label{eq:almostfinished}
        A=\vec D^{(L)}+\vec U^{(L)} \, A^{(L-1)} \,(\vec V^{(L)})^T, \quad \text{where} \quad A^{(L-1)}=(\vec U^{(L)})^T\big(A-\vec D^{(L)}\big)\vec V^{(L)}.
     \end{equation*}
    Together with the discussion from Section~\ref{sec:productHSS}, it follows that $A^{(L-1)}$ is an HSS matrix associated with $\T_{2r}^{(L-1)}$ and defined by the data-sparse representation
\[
    \{U_{\tau},V_{\tau}: \tau \in \T_{2r}^{(L-1)}\}, \quad \{\tilde A_{\tau,\tau'}:\tau,\tau' \, \mathrm{siblings}\, \mathrm{in} \, \T_{2r}^{(L-1)}\},  \quad  \{A^{(L-1)}_{\tau,\tau}:\tau \, \mathrm{leaf} \,\mathrm{in} \, \T_{2r}^{(L-1)}\}.
\]
\emph{If} $A^{(L-1)}_{\tau,\tau}=D_{\tau}$, this completes the proof by induction: the assumptions of this theorem are satisfied for the level-$(L-1)$ HSS matrix $A^{(L-1)}$ and thus 
$\{U_{\tau},V_{\tau},D_{\tau}\}_{\text{depth}(\tau)\le L-1}$ is a telescopic decomposition of $A^{(L-1)}$. In turn, all conditions of Definition~\ref{def:telesc-decomp} are satisfied and $\{U_{\tau},V_{\tau},D_{\tau}\}$ is a telescopic decomposition for $A$.

It remains to show that $A^{(L-1)}_{\tau,\tau}=D_{\tau}$ holds for any node $\tau$ of depth $L-1$. For this purpose, let $\alpha, \beta$ denote its children, which are leaves in $\T$. Using that $D_\alpha = A_{\alpha,\alpha}$ and $D_\beta = A_{\beta,\beta}$, we indeed obtain that
    \begin{equation*}
        A^{(L-1)}_{\tau,\tau}=\begin{bmatrix}
            U_{\alpha}^T\\&U_{\beta}^T
        \end{bmatrix}\left(A_{\tau,\tau}-\begin{bmatrix}
            D_{\alpha}\\&D_{\beta}
        \end{bmatrix}\right) \begin{bmatrix}
            V_{\alpha}\\&V_{\beta}
        \end{bmatrix}=\begin{bmatrix}
           0&  U_{\alpha}^T A_{\alpha,\beta}V_{\beta}\\U_{\beta}^TA_{\beta,\alpha}V_{\alpha}&0
        \end{bmatrix}=D_{\tau}.
    \end{equation*}
    where the last equality follows from Definition~\ref{def:HSS}.
\end{proof}

The proof of Proposition~\ref{prop:hss_to_tel} shows that the matrix $ A^{(L-1)}$ generated by the telescopic decomposition 
$\{U_{\tau},V_{\tau},D_{\tau}\}_{\tau \in \T_{2r}^{(L-1)}}$ satisfies $(A^{(L-1)})_{\tau,\tau}=D_{\tau}$ for every leaf $\tau $ of 
$\T_{2r}^{(L-1)}$. Letting $\T_{2r}^{(\ell)}$ denote a balanced cluster tree of depth $\ell \le L-1$, we can apply this property recursively and obtain
        \begin{equation}\label{prop:condition2}
            (A^{(\ell)})_{\tau,\tau}=D_{\tau}\quad  \text{for every leaf $\tau $ of 
$\T_{2r}^{(\ell)}$ for all $1\le \ell\le L-1$,}
        \end{equation}
where $A^{(\ell)}$ denotes the matrix generated by the telescopic decomposition $\{U_{\tau},V_{\tau},D_{\tau}\}_{\tau\in \T_{2r}^{(\ell)}}$. It follows from Proposition~\ref{prop:hss_to_tel} and Proposition~\ref{prop:tel_to_hss} below that telescopic decompositions with this property are in a simple one-to-one correspondence with HSS matrices, which justifies the following definition.
    
    \begin{definition}\label{def:standard-tel}
        A telescopic decomposition $\{U_{\tau},V_{\tau},D_{\tau}\}_{\tau\in \T}$ of a matrix $A$ is called $\emph{standard}$ if~\eqref{prop:condition2} holds.
    \end{definition}

\begin{proposition} \label{prop:tel_to_hss}
    Let $\T$ be a cluster tree of depth $L$ and let $A$ be the matrix generated by a standard telescopic decomposition $\{U_{\tau},V_{\tau},D_{\tau}\}_{\tau\in \T}$ of telescopic rank $r$. Then, for each nonleaf node $\tau$ with children $\alpha,\beta$, there exist matrices $\tilde A_{\alpha,\beta}, \tilde A_{\beta,\alpha} \in \R^{k\times k}$ such that 
    \begin{equation} \label{eq:dtau}
        D_{\tau}=\begin{bmatrix}            0 & \tilde A_{\alpha,\beta}\\\tilde A_{\beta,\alpha}&0 
               \end{bmatrix}.
    \end{equation} 
    Moreover, $A$ is an HSS matrix of HSS rank $r$ with the data-sparse representation
\[
    \{U_{\tau},V_{\tau}: \tau \in \mathcal T\}, \quad \{\tilde A_{\tau,\tau'}:\tau,\tau' \, \mathrm{sibling}\, \mathrm{nodes}\},  \quad  \{A_{\alpha,\alpha}:\alpha \, \mathrm{leaf} \,\mathrm{node}\}.
\]
\end{proposition}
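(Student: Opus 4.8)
The plan is to argue by induction on the depth $L$ of the cluster tree $\mathcal{T}$, essentially reversing the proof of Proposition~\ref{prop:hss_to_tel}. For $L=0$ there is nothing to prove: $\mathcal{T}$ is a single leaf $\gamma$, $A=D_\gamma$ by Definition~\ref{def:telesc-decomp}, there are no nonleaf nodes, and the claimed data-sparse representation consists only of the diagonal block $A_{\gamma,\gamma}=D_\gamma$.

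For the inductive step I would fix $L\ge 1$ and use Definition~\ref{def:telesc-decomp} to write $A=\vec D^{(L)}+\vec U^{(L)}A^{(L-1)}(\vec V^{(L)})^T$. First I would check that the induced telescopic decomposition $\{U_\tau,V_\tau,D_\tau\}_{\tau\in\mathcal{T}_{2r}^{(L-1)}}$ of $A^{(L-1)}$ is again standard: the auxiliary matrices $A^{(\ell)}$ generated from it for $\ell\le L-2$ coincide with those generated from the decomposition of $A$, so the instances of~\eqref{prop:condition2} required of $A^{(L-1)}$ are among those assumed for $A$. The induction hypothesis then tells us that $A^{(L-1)}$ is an HSS matrix of HSS rank $r$ with data-sparse representation $\{U_\tau,V_\tau\}$, $\{\tilde A_{\tau,\tau'}\}$, $\{(A^{(L-1)})_{\tau,\tau}\}$ over $\mathcal{T}_{2r}^{(L-1)}$, and that $D_\tau$ has the block-anti-diagonal form~\eqref{eq:dtau} for every nonleaf node $\tau$ of $\mathcal{T}_{2r}^{(L-1)}$, that is, for every node of $\mathcal{T}$ of depth at most $L-2$.

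It then remains to verify~\eqref{eq:dtau} at depth $L-1$ and to recognize $A$ itself as an HSS matrix. For a node $\tau$ of depth $L-1$ with children $\alpha,\beta$ (leaves of $\mathcal{T}$), standardness supplies $(A^{(L-1)})_{\tau,\tau}=D_\tau$ as well as $A_{\alpha,\alpha}=D_\alpha$ and $A_{\beta,\beta}=D_\beta$; inserting the block partition of $A_{\tau,\tau}$ into $D_\tau=(A^{(L-1)})_{\tau,\tau}=\blkdiag(U_\alpha,U_\beta)^T\bigl(A_{\tau,\tau}-\blkdiag(D_\alpha,D_\beta)\bigr)\blkdiag(V_\alpha,V_\beta)$ --- the very computation that closes the proof of Proposition~\ref{prop:hss_to_tel} --- the two diagonal sub-blocks cancel and $D_\tau=\left[\begin{smallmatrix}0 & U_\alpha^TA_{\alpha,\beta}V_\beta\\ U_\beta^TA_{\beta,\alpha}V_\alpha & 0\end{smallmatrix}\right]$, so~\eqref{eq:dtau} holds at depth $L-1$ as well. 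Now $D_\tau$ is block-anti-diagonal at every nonleaf node, so I would form the candidate HSS matrix $\hat A$ whose data-sparse representation consists of the $U_\tau,V_\tau$, of the off-diagonal blocks of the matrices $D_\tau$ in the role of the $\tilde A_{\tau,\tau'}$, and of the diagonal blocks $A_{\alpha,\alpha}$. By construction --- and because $A_{\alpha,\alpha}=D_\alpha$ at the leaves --- Proposition~\ref{prop:hss_to_tel} applied to $\hat A$ returns precisely the telescopic decomposition $\{U_\tau,V_\tau,D_\tau\}$. Since a telescopic decomposition determines the matrix it represents uniquely (cf.\ Algorithm~\ref{Alg:tel-to-full}), it follows that $A=\hat A$, which is HSS of HSS rank $r$ with the stated data-sparse representation; this completes the induction.

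I expect the delicate point to be the identity~\eqref{eq:dtau} at depth $L-1$, where standardness must be used in full strength: the cancellation of the diagonal sub-blocks rests on $A_{\alpha,\alpha}=D_\alpha$ at the leaves, and the identification $D_\tau=(A^{(L-1)})_{\tau,\tau}$ is the finest instance of~\eqref{prop:condition2}, one that the inductive hypothesis on $A^{(L-1)}$ does not by itself deliver. The remaining work --- transporting the HSS structure of $A^{(L-1)}$ back to $A$ --- is essentially formal, amounting to the converse of Proposition~\ref{prop:hss_to_tel} together with the uniqueness of the matrix represented by a telescopic decomposition.
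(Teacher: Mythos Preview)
Your argument is correct and shares its inductive skeleton with the paper's proof: both note that the telescopic decomposition of $A^{(L-1)}$ is again standard, and both establish the block-anti-diagonal form~\eqref{eq:dtau} at depth $L-1$ via the same computation from~\eqref{eqn:hatA} (relying, as you correctly flag, on $A_{\alpha,\alpha}=D_\alpha$ at the leaves in addition to the $\ell=L-1$ instance of~\eqref{prop:condition2}). The proofs diverge only in how they conclude that $A$ is HSS. The paper verifies Point~1 of Definition~\ref{def:HSS} directly: for sibling leaves it reads $A_{\alpha,\beta}=U_\alpha\tilde A_{\alpha,\beta}V_\beta^T$ off the telescopic relation combined with~\eqref{eq:dtau}, and for siblings $\tau,\tau'$ at depth $\ell<L$ it invokes the inductive HSS structure of $A^{(L-1)}$ together with the nestedness identity $\blkdiag(U_{\alpha_i})\blkdiag(U_{\alpha_i})^T U^{(\mathrm{big})}_\tau=U^{(\mathrm{big})}_\tau$ to lift the factorization of $A^{(L-1)}_{\tau,\tau'}$ to one of $A_{\tau,\tau'}$. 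You instead build the candidate HSS matrix $\hat A$ from the extracted data, feed it back through Proposition~\ref{prop:hss_to_tel}, and appeal to uniqueness of the matrix encoded by a telescopic decomposition (Algorithm~\ref{Alg:tel-to-full}). Your route is a bit slicker---it treats Propositions~\ref{prop:hss_to_tel} and~\ref{prop:tel_to_hss} as mutual inverses and avoids the explicit $U^{(\mathrm{big})}$ bookkeeping---while the paper's direct check has the virtue of making the off-diagonal factorizations $A_{\tau,\tau'}=U^{(\mathrm{big})}_\tau\tilde A_{\tau,\tau'}(V^{(\mathrm{big})}_{\tau'})^T$ visible at every level.
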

\begin{proof}
    We proceed by induction on $L$. For $L=0$, the result trivially holds. Suppose now that $L\ge 1$. According to Definition~\ref{def:telesc-decomp}, the matrix generated by the standard telescopic decomposition $\{U_{\tau},V_{\tau},D_{\tau}\}_{\tau\in \T_{2r}^{(L-1)}}$ is the matrix $A^{(L-1)} $ defined in~\eqref{eqn:hatA}. Therefore, if $\tau$ is a node of depth $L-1$ with children $\alpha$ and $\beta$, it follows from~\eqref{prop:condition2} that
    \begin{equation*}
        D_{\tau}=A^{(L-1)}_{\tau,\tau}=\begin{bmatrix}            
            0 & U_{\alpha}^T A_{\alpha,\beta}V_{\beta}\\U_{\beta}^T A_{\beta,\alpha} V_{\alpha}&0        \end{bmatrix}
            = \begin{bmatrix}            0 & \tilde A_{\alpha,\beta}\\\tilde A_{\beta,\alpha}&0 
               \end{bmatrix},
    \end{equation*}
    where we set $\tilde A_{\alpha,\beta}= U_{\alpha}^T A_{\alpha,\beta}V_{\beta}$ and $\tilde A_{\beta,\alpha}=U_{\beta}^T A_{\beta,\alpha} V_{\alpha}$. This proves~\eqref{eq:dtau}.
    
    It remains to establish the HSS property of $A$, that is, Point 1 of Definition~\ref{def:HSS} (note that Point 2 is satisfied by construction). Combining~\eqref{eq:dtau} with the telescopic relation~\eqref{eqn:telescop}, we obtain that
    \begin{equation*}
        A_{\tau,\tau}=\begin{bmatrix} D_{\alpha}\\& D_{\beta}
        \end{bmatrix}+
        \begin{bmatrix} U_{\alpha}\\& U_{\beta}
        \end{bmatrix}
        D_{\tau}
        \begin{bmatrix} V_{\alpha}\\& V_{\beta}
        \end{bmatrix} = \begin{bmatrix}
        D_{\alpha} & U_{\alpha} \tilde A_{\alpha,\beta} V_{\beta}^T \\
        U_{\beta} \tilde A_{\beta,\alpha} V_{\alpha}^T & D_{\beta}
        \end{bmatrix}.
    \end{equation*}
    In particular, $A_{\alpha,\beta}=U_{\alpha}\tilde {A}_{\alpha,\beta}V_{\alpha}^T$, which establishes Point 1 of Definition~\ref{def:HSS} for two sibling leaves $\tau = \alpha$, $\tau^\prime = \beta$. 
    To show the corresponding property for two siblings $\tau$ and $\tau'$ of depth $\ell<L$, let  $\alpha_1,\dots,\alpha_{2^{L-\ell}}$ and $\alpha'_1,\dots,\alpha'_{2^{L-\ell}}$ denote the leaf nodes in the corresponding subtrees. 
    Using~\eqref{eqn:telescop}, the proof is completed by noting that
    \begin{eqnarray*}
        A_{\tau,\tau'}&=&
        \blkdiag(U_{\alpha_i})A^{(L-1)}_{\tau,\tau'} \blkdiag(V_{\alpha'_i})^T \\
        &=& \blkdiag(U_{\alpha_i}) \blkdiag(U_{\alpha_i})^T U^{(\mathrm{big})}_{\tau}\tilde{A}_{\tau,\tau'} \big( V^{(\mathrm{big})}_{\tau'}\big)^T \blkdiag(V_{\alpha'_i}) \blkdiag(V_{\alpha'_i})^T \\
        &=& U^{(\mathrm{big})}_{\tau}\tilde{A}_{\tau,\tau'}(V^{(\mathrm{big})}_{\tau'})^T,
    \end{eqnarray*}
    where the second equality uses 
 induction: $A^{(L-1)}$ is an HSS matrix and satisfies a relation of the form~\eqref{eqn:hssrecursion} for the parent $\gamma$ of $\tau,\tau'$.
\end{proof}

\subsection{Converting a general telescopic decomposition into a standard one}\label{sec:convert_standard}

In the following, we describe a procedure that turns an arbitrary telescopic decomposition $\{U_{\tau},V_{\tau},D_{\tau}\}$ of a matrix $A$ into a standard telescopic decomposition $\{U_{\tau},V_{\tau},C_{\tau}\}$. By the results of Section~\ref{sec:from-hss-to-telescop}, this implies the equivalence between HSS matrices of HSS rank $r$ and matrices that admit a telescopic decomposition of telescopic rank $r$. 
    For this purpose, it is crucial to understand how we can recover the principal submatrices $A_{\alpha,\alpha}$ for leaf nodes $\alpha$, since these matrices correspond to the matrices $C_{\alpha}$ in the standard telescopic decomposition. 

    \begin{proposition}\label{prop:princ-sumbatrices}
    For a cluster tree $\T$ of depth $L$, let $A$ be a matrix in telescopic decomposition $\{U_{\tau},V_{\tau},D_{\tau}\}$ of telescopic rank $r$. Then the following holds:
        \begin{enumerate}
            \item if $L=0$ (i.e., $\T$ consists only of the root $\gamma$) then $A_{\gamma,\gamma}=D_{\gamma}$;
            \item if $L\ge 1$, any pair of sibling leaf nodes $\alpha,\beta$ with parent $\tau$ satisfies
            \begin{equation}\label{eqn:ahat-in-prop}
                \begin{bmatrix}A_{\alpha,\alpha}\\&A_{\beta,\beta}\end{bmatrix}=\begin{bmatrix}D_{\alpha}\\&D_{\beta}\end{bmatrix}+\begin{bmatrix}U_{\alpha}\big[(A^{(L-1)}_{\tau,\tau})_{\mathsf 1,\mathsf 1}\big]V_{\alpha}^T\\&U_{\beta}\big[(A^{(L-1)}_{\tau,\tau})_{\mathsf 2,\mathsf 2}\big]V_{\beta}^T\end{bmatrix},
            \end{equation}
            with the matrix $A^{(L-1)}$ from Definition~\ref{def:telesc-decomp}, and $(A^{(L-1)}_{\tau,\tau})_{\mathsf 1,\mathsf 1}$ and $(A^{(L-1)}_{\tau,\tau})_{\mathsf 2,\mathsf 2}$ denoting the (1,1) and (2,2) diagonal blocks of $A^{(L-1)}_{\tau,\tau}\in \R^{2r\times 2r}$, respectively.
        \end{enumerate}

    \end{proposition}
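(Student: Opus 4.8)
The plan is to prove both statements by straightforward unwinding of Definition~\ref{def:telesc-decomp}, proceeding by induction on the depth $L$ — or, more directly, by just extracting the relevant diagonal blocks from the telescopic relation~\eqref{eqn:telescop}. Statement~1 is immediate: if $L=0$, then by Definition~\ref{def:telesc-decomp}.3 we have $A=D_\gamma$, and since $\gamma$ is the only node, $A_{\gamma,\gamma}=A=D_\gamma$.

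For Statement~2, first I would assume $L\ge 1$ and write down the telescopic relation~\eqref{eqn:telescop}, namely $A = \vec D^{(L)} + \vec U^{(L)} A^{(L-1)} (\vec V^{(L)})^T$. The key observation is that $\vec D^{(L)}$, $\vec U^{(L)}$, and $\vec V^{(L)}$ are all block diagonal with blocks indexed by the depth-$L$ leaves, so to extract the principal submatrix on the index set $I_\alpha\cup I_\beta$ (for sibling leaves $\alpha,\beta$ with parent $\tau$) I only need the corresponding diagonal blocks of each factor. Concretely, the relevant block of $\vec D^{(L)}$ is $\blkdiag(D_\alpha, D_\beta)$, the relevant block of $\vec U^{(L)}$ is $\blkdiag(U_\alpha, U_\beta)$, the relevant block of $\vec V^{(L)}$ is $\blkdiag(V_\alpha, V_\beta)$, and the relevant block of $A^{(L-1)}$ is precisely $A^{(L-1)}_{\tau,\tau}$ — this is because the rows/columns of $A^{(L-1)}$ that $\vec U^{(L)}$ and $\vec V^{(L)}$ couple to the $\alpha,\beta$ block are exactly those indexed by $\tau$ in the cluster tree $\T_{2r}^{(L-1)}$, which follows from how the index vectors are nested ($I_\tau$ in $\T_{2r}^{(L-1)}$ is the concatenation of the two length-$r$ index sets associated with $\alpha$ and $\beta$ at the leaf level of $\T_{2r}^{(L-1)}$). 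Multiplying out,
\begin{equation*}
\begin{bmatrix}A_{\alpha,\alpha} & A_{\alpha,\beta}\\ A_{\beta,\alpha} & A_{\beta,\beta}\end{bmatrix}
= \begin{bmatrix}D_\alpha & \\ & D_\beta\end{bmatrix}
+ \begin{bmatrix}U_\alpha & \\ & U_\beta\end{bmatrix}
\begin{bmatrix}(A^{(L-1)}_{\tau,\tau})_{\mathsf 1,\mathsf 1} & (A^{(L-1)}_{\tau,\tau})_{\mathsf 1,\mathsf 2}\\ (A^{(L-1)}_{\tau,\tau})_{\mathsf 2,\mathsf 1} & (A^{(L-1)}_{\tau,\tau})_{\mathsf 2,\mathsf 2}\end{bmatrix}
\begin{bmatrix}V_\alpha^T & \\ & V_\beta^T\end{bmatrix},
\end{equation*}
and reading off the $(1,1)$ and $(2,2)$ blocks gives exactly~\eqref{eqn:ahat-in-prop}.

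The main thing to get right — and the only real obstacle — is the bookkeeping of indices: one must be careful that the $(1,1)$ and $(2,2)$ diagonal blocks of $A^{(L-1)}_{\tau,\tau}\in\R^{2r\times 2r}$ really do correspond, through $\vec U^{(L)}$ and $\vec V^{(L)}$, to the blocks $U_\alpha(\cdot)V_\alpha^T$ and $U_\beta(\cdot)V_\beta^T$ respectively. This is where the structure of the balanced cluster tree $\T_{2r}^{(L-1)}$ (Definition~\ref{def:balanced-tree}) and the ordering convention in~\eqref{eqn:def-blkdiag} enter: the children $\alpha,\beta$ of $\tau$ in $\T$ appear consecutively and in order, so the leaf of $\T_{2r}^{(L-1)}$ associated with $\tau$ carries $2r$ indices split as the first $r$ (matched by $U_\alpha,V_\alpha$) and the last $r$ (matched by $U_\beta,V_\beta$). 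Once this correspondence is pinned down, the computation above is purely mechanical. I would phrase the argument as a direct block extraction rather than a formal induction, since no recursion on $A^{(L-1)}$ is actually needed to prove this particular statement — the telescopic relation~\eqref{eqn:telescop} at the top level already supplies everything.
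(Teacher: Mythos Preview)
Your proposal is correct and follows essentially the same approach as the paper: both arguments extract the $\tau$-block from the telescopic relation~\eqref{eqn:telescop} to obtain $A_{\tau,\tau}=\blkdiag(D_\alpha,D_\beta)+\blkdiag(U_\alpha,U_\beta)\,A^{(L-1)}_{\tau,\tau}\,\blkdiag(V_\alpha,V_\beta)^T$ and then read off the diagonal blocks. Your discussion of the index bookkeeping is more explicit than the paper's, but the underlying argument is identical.
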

    \begin{proof}
     Point 1 follows directly from the definition of $D_{\gamma}$. To prove Point 2, we observe that~\eqref{eqn:telescop} implies
     \begin{equation*}
        \begin{bmatrix}
            A_{\alpha,\alpha} & * \\ * &A_{\beta,\beta}
        \end{bmatrix}=A_{\tau,\tau}=\begin{bmatrix}
            D_{\alpha} &  \\&D_{\beta}
        \end{bmatrix}+\begin{bmatrix}
            U_{\alpha} &  \\&U_{\beta}
        \end{bmatrix}A^{(L-1)} _{\tau,\tau}\begin{bmatrix}
            V^T_{\alpha} &  \\&V^T_{\beta}
        \end{bmatrix}.
     \end{equation*}
     Therefore, taking the diagonal blocks concludes the proof.
    \end{proof}

    The previous proposition combined with the fact that a telescopic decomposition of the matrix $A^{(L-1)}$ employed in \eqref{eqn:ahat-in-prop} is given by  $\{U_{\tau},V_{\tau},D_{\tau}\}_{\mathrm{depth}(\tau)\le L-1}$ (see Definiton~\ref{def:telesc-decomp}), results in a practical way to compute the matrices $A_{\alpha,\alpha}$ for all leaves $\alpha$; see Algorithm~\ref{Alg:principal-submat}.

    \begin{algorithm}
        \begin{algorithmic}
            \Require{Matrix $A$  in telescopic decomposition $\{U_{\tau},V_{\tau},D_{\tau}\}$ for cluster tree $\T$ of depth $L$}
            \Ensure{ \{$A_{\alpha,\alpha}:\alpha$ leaf node\}}
        \State $\hat A_{\gamma,\gamma}\gets D_{\gamma}$ for root $\gamma$ of $\T$
        \For{$\ell = 0,\dots, L-1$}
        \For {each node $\tau$ of depth $\ell$}
        \State Denoting by $\alpha,\beta$ the children of $\tau$ and defining $(\hat A_{\tau,\tau})_{\mathsf 1,\mathsf 1}$, $(\hat A_{\tau,\tau})_{\mathsf 2,\mathsf 2}$ as in Proposition~\ref{prop:princ-sumbatrices}
        \State $\left[\begin{smallmatrix}\hat A_{\alpha,\alpha}\\&\hat A_{\beta,\beta}\end{smallmatrix}\right]\gets \left[\begin{smallmatrix}D_{\alpha}\\&D_{\beta}\end{smallmatrix}\right]+\left[\begin{smallmatrix}U_{\alpha}\big[(\hat A_{\tau,\tau})_{\mathsf 1,\mathsf 1}\big]V_{\alpha}^T\\&U_{\beta}\big[(\hat A_{\tau,\tau})_{\mathsf 2,\mathsf 2}\big]V_{\beta}^T\end{smallmatrix}\right]$
        \EndFor
        \EndFor
        \For {each leaf node $\alpha$}
        \State $A_{\alpha,\alpha} \gets \hat A_{\alpha,\alpha}$
        \EndFor
    \end{algorithmic}
    
        \caption{Computation of principal submatrices of $A$ given in telescopic decomposition} \label{Alg:principal-submat}
    
    \end{algorithm}

    To satisfy condition~\eqref{prop:condition2} of a standard telescopic decomposition on the leaf level, we need to set 
    \begin{equation*}
        C_{\alpha}:=A_{\alpha,\alpha}
    \end{equation*}
    for each leaf node $\alpha$.
    Moreover, if $L\ge 1$, for each node $\tau$ of depth $L-1$ the matrix $C_{\tau}$ is given by $A^{(L-1)}_{\tau,\tau}$, where $A^{(L-1)}$ is now defined as    \begin{equation*}
        A^{(L-1)} := (\vec U^{(L)})^T\, \big(A-\vec C^{(L)}\big) \, \vec V^{(L)}=(\vec U^{(L)})^T\, \big(A-\vec D^{(L)}\big) \, \vec V^{(L)}+(\vec U^{(L)})^T\, \big(\vec D^{(L)}-\vec C^{(L)}\big) \, \vec V^{(L)},
    \end{equation*}
    with the block diagonal matrices $\vec U^{(L)},\vec V^{(L)},\vec D^{(L)}$ and $\vec C^{(L)}$ defined from $\{U_{\tau}\},  \{V_{\tau}\},\{D_{\tau}\}$ and $\{C_{\tau}\}$, respectively, according to \eqref{eqn:def-blkdiag}. By Definition~\ref{def:telesc-decomp}, the matrix $(\vec U^{(L)})^T\, \big(A-\vec D^{(L)}\big) \, \vec V^{(L)}$ is generated by the telescopic decomposition $\{U_{\tau},V_{\tau},D_{\tau}\}_{\mathrm{depth}(\tau)\le L-1}$, hence defining 
    \begin{equation*}
        \hat D_\tau :=\begin{cases}
             D_{\tau}-\begin{bmatrix}
                U_{\alpha}^T(D_{\alpha}-C_{\alpha})V_{\alpha}\\& U_{\beta}^T(D_{\beta}-C_{\beta})V_{\beta} 
            \end{bmatrix}\quad &\, \mathrm{if}\, \tau \, \mathrm{has\, depth} \, L-1 \, \mathrm{and \, children}\, \alpha,\beta;\\
            D_{\tau} &\mathrm{otherwise;}
        \end{cases}
    \end{equation*}
    the matrix $A^{(L-1)}$ is generated by the telescopic decomposition $\{U_{\tau},V_{\tau},\hat D_{\tau}\}_{\mathrm{depth}(\tau)\le L-1}$. Therefore a standard decomposition of $A$ can be computed by iterating Algorithm~\ref{Alg:principal-submat}, as summarized in Algorithm~\ref{Alg:telesc-to-standard}. In particular, the computational complexity of transforming a telescopic decomposition into a standard one is $\mathcal{O}(r^3 2^L)$ where $r$ is the telescopic rank of $A$. Assuming the threshold size and the telescopic rank to be constant, this shows linear complexity in the size of $A$.

    \begin{algorithm}
        \begin{algorithmic}
            \Require{Matrix $A$  in telescopic decomposition $\{U_{\tau},V_{\tau},D_{\tau}\}$ for cluster tree $\T$ of depth $L$}
            \Ensure{Standard telescopic decomposition $\{U_{\tau},V_{\tau},C_{\tau}\}$ of $A$}
        \State \{$C_{\alpha}:\alpha$ leaf node\} $\gets $ 
        Algorithm~\ref{Alg:principal-submat} applied to $\{U_{\tau},V_{\tau},D_{\tau}\}$
        \For {each node $\tau$ }
        \State $\hat D_{\tau} \gets D_{\tau}$
        \EndFor
        \For{$\ell = L-1,\dots, 0$}
        \For {each node $\tau$ of depth $\ell$}
        \State Denoting by $\alpha,\beta$ the children of $\tau$
        \State $\hat D_\tau \gets
            \hat D_{\tau}-\begin{bmatrix}
               U_{\alpha}^T(\hat D_{\alpha}-C_{\alpha})V_{\alpha}\\& U_{\beta}^T(\hat D_{\beta}-C_{\beta})V_{\beta} 
           \end{bmatrix}$
        \EndFor
        \State \{$C_{\tau}:\tau \in \T$ of depth $\ell$\} $\gets $ 
        Algorithm~\ref{Alg:principal-submat} applied to $\{U_{\tau},V_{\tau},\hat D_{\tau}\}_{\mathrm{depth}(\tau)\le l}$
        \EndFor
    \end{algorithmic}
    
        \caption{Computation of a standard telescopic decomposition of $A$ given in telescopic factors.} \label{Alg:telesc-to-standard}
    
    \end{algorithm}
    
\subsection{Symmetric telescopic decompositions} \label{sec:symmetric}

For a symmetric matrix, the definition of a telescopic decomposition can be adjusted to reflect symmetry. 

    \begin{definition}\label{def:symm-tel-dec}
        A telescopic decomposition $\{U_{\tau}, V_{\tau}, D_{\tau}\}_{\tau \in \T}$ is said to be \emph{symmetric} if $V_{\tau} =U_{\tau}$ and $D_{\tau}^T = D_{\tau}$ hold for every $\tau \in \T$. In analogy to the nonsymmetric case, we employ the term \emph{standard} if \eqref{prop:condition2} is satisfied.
        For simplicity, a symmetric telescopic decomposition is denoted by $\{U_{\tau},D_{\tau}\}_{\tau \in \T}$, avoiding the repetition of $U_{\tau}$.
    \end{definition}

    If $A$ is symmetric and has HSS rank $r$, there exist data-sparse representations of the form~\eqref{eqn:data-sparse}, for which $U_{\tau}=V_{\tau}$ have $r$ columns and $\tilde A_{\tau,\tau'}^T=\tilde A_{\tau',\tau}$ holds for each pair of sibling nodes $\tau,\tau'$ (see \cite[Section~4.1]{martinsson2011fast}). Therefore, Proposition~\ref{prop:hss_to_tel} implies that $A$ admits a symmetric telescopic decomposition of rank $r$. We also recall that the procedure described in Section~\ref{sec:convert_standard} converts a telescopic decomposition $\{U_{\tau},V_{\tau},D_{\tau}\}$ into a standard one, without changing the matrices $U_{\tau}, V_{\tau}$. In turn, the same procedure can be employed to convert a symmetric telescopic decomposition into a standard symmetric telescopic decomposition.

\section{Computing telescopic decompositions for functions of symmetric HSS matrices}\label{sec:comp-fun}

    If $A$ is a symmetric HSS matrix
    with spectrum contained in $[\lambda_{\min}, \lambda_{\max}]$ 
    and the function $f$ is analytic
    on $[\lambda_{\min}, \lambda_{\max}]$, then $f(A)$ can usually be well 
    approximated by an HSS matrix. While this has been observed before~\cite[Section~3]{cortinovis2022divide}, it is nontrivial to develop an algorithm that fully exploits this property. In this section, we derive such an algorithm that computes a telescopic decomposition for an HSS approximation of $f(A)$ starting from a standard symmetric
    telescopic decomposition of $A$. If needed, this can be converted into a standard telescopic decomposition, employing the results of Section~\ref{sec:convert_standard}, and therefore into an HSS data-sparse representation~\eqref{eqn:data-sparse}.
    If $f$ is a rational function of a certain degree, we show that such an approximation is exact and, otherwise, the approximation error is bounded using a rational approximation of $f$ on $[\lambda_{\min}, \lambda_{\max}]$. 

    The following theorem summarizes results from~\cite{beckermann2021low} fundamental for our purpose. It shows how the low-rank update of a matrix function can be approximated using the rational Krylov subspaces from Section~\ref{sec:Krylov}.
    \begin{theorem}[{\cite[Theorem~4.5]{beckermann2021low}}] \label{thm:BCKS}
        Let $A = D + Z \tilde A Z^T,$ where $D\in \R^{n\times n}, \tilde A \in \R^{m\times m}$ are symmetric and $Z\in\R^{n\times m}$. Let $[\lambda_{\min}, \lambda_{\max}]$ be an interval that contains the spectra of $A$ and $D$, and let $\vec{\xi}_k = ( \xi_0,\dots,\xi_{k-1})$, with $\xi_j \in \C\cup\{\infty\}$, be a list of poles closed under complex conjugation. For a function $f$ analytic on 
        $[\lambda_{\min}, \lambda_{\max}]$, we consider the  approximation
        \begin{equation}\label{eqn:approxBCKS}
            f(A) \approx \hat F:= f(D) + W \big(f(W^TAW)-f(W^TDW)\big) W^T,
        \end{equation}
        where $W$ is an orthonormal basis of $\rat(D,Z, \vec{\xi}_k)$. Then the approximation error $E(f):= \hat F - f(A)$ satisfies
        \begin{equation*}
            \norm{E(f)}_2\le 4 \min_{r\in \pol_{k}/q_k}\norm{f-r}_{\infty},
        \end{equation*}
        where $q_k =\prod_{\xi\in\vec\xi_k, \xi\neq \infty}(x-\xi)$, $\pol_k$ denotes the set of polynomials of degree at most $k-1$, and $\norm{\cdot}_{\infty}$ denotes the supremum norm on $[\lambda_{\min}, \lambda_{\max}]$. In particular, the approximation \eqref{eqn:approxBCKS} is exact if $f\in \pol_{k}/q_k$.
    \end{theorem}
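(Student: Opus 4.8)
The plan is to reproduce the argument of \cite[Theorem~4.5]{beckermann2021low}, which rests on two ingredients: an exactness result for rational functions, and a transfer from the rational case to the general analytic case via a best-approximation estimate. First I would establish the exactness claim: if $f = p/q_k$ with $p \in \pol_k$, then $\hat F = f(A)$. The key structural fact is that the rational Krylov space $\rat(D,Z,\vec\xi_k)$ is invariant in the sense that $q_k(D)^{-1} D^j Z \in \mathrm{colspan}(W)$ for $0 \le j \le k-1$, and more importantly that $f(A) - f(D)$ has its column (and, by symmetry, row) space contained in $\mathrm{colspan}(W)$. To see this one writes, using a partial-fraction / resolvent representation of $f$, that $f(A) - f(D)$ is a sum of terms of the form $(A-\xi I)^{-1} - (D - \xi I)^{-1} = (A - \xi I)^{-1}(D - A)(D - \xi I)^{-1}$ together with a finite-difference correction; since $D - A = -Z\tilde A Z^T$ has column space in $\mathrm{colspan}(Z)$, and since $\mathrm{colspan}\big((D-\xi I)^{-1} Z\big) \subseteq \mathrm{colspan}(W)$ for every pole $\xi \in \vec\xi_k$ while the polynomial part contributes $D^j Z$, one concludes that the column space of $f(A) - f(D)$ lies in $\mathrm{colspan}(W)$. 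Writing $f(A) - f(D) = W M W^T$ for some symmetric $M$ and projecting gives $M = W^T(f(A) - f(D))W = f(W^TAW) - f(W^TDW)$, where the last identity again uses that $W W^T$ acts as the identity on the relevant subspace so that $f$ commutes with compression; this yields $\hat F = f(A)$.

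Next I would pass to a general analytic $f$. Fix the best rational approximant $r^\ast \in \pol_k/q_k$ achieving $\min_{r}\norm{f-r}_\infty$ on $[\lambda_{\min},\lambda_{\max}]$, and split $E(f) = E(f - r^\ast) + E(r^\ast)$. By the exactness just proved, $E(r^\ast) = 0$, so $E(f) = E(f - r^\ast)$. It then suffices to bound $\norm{E(g)}_2$ for $g := f - r^\ast$ in terms of $\norm{g}_\infty$. From the definition~\eqref{eqn:approxBCKS}, $E(g) = g(D) + W\big(g(W^TAW) - g(W^TDW)\big)W^T - g(A)$, so by the triangle inequality and $\norm{W}_2 = 1$,
\begin{equation*}
    \norm{E(g)}_2 \le \norm{g(A)}_2 + \norm{g(D)}_2 + \norm{g(W^TAW)}_2 + \norm{g(W^TDW)}_2.
\end{equation*}
Each of the four matrices $A$, $D$, $W^TAW$, $W^TDW$ is symmetric with spectrum contained in $[\lambda_{\min},\lambda_{\max}]$: for $A$ and $D$ this is assumed, and for the compressions $W^TAW$, $W^TDW$ it follows from the Cauchy interlacing/Rayleigh quotient bound since $W$ has orthonormal columns. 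Hence for any symmetric $X$ with spectrum in $[\lambda_{\min},\lambda_{\max}]$ we have $\norm{g(X)}_2 = \max_i |g(\lambda_i(X))| \le \norm{g}_\infty$, giving $\norm{E(g)}_2 \le 4\norm{g}_\infty = 4\min_{r\in\pol_k/q_k}\norm{f-r}_\infty$, as claimed. The final sentence of the theorem is the special case $f \in \pol_k/q_k$, where the minimum is zero.

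I expect the main obstacle to be the rigorous justification of the exactness statement, specifically the claim that $\mathrm{colspan}\big(f(A) - f(D)\big) \subseteq \mathrm{colspan}(W)$ and the identity $W^T f(A) W = f(W^TAW)$ restricted to that subspace. The cleanest route is a contour-integral representation $f(A) = \frac{1}{2\pi i}\oint f(z)(zI - A)^{-1}\,dz$ on a contour enclosing $[\lambda_{\min},\lambda_{\max}]$, combined with the second resolvent identity and the observation that, for a rational $f = p/q_k$, the contour integral collapses to a finite linear combination of the resolvents $(D - \xi_j I)^{-1}Z$ and the powers $D^j Z$ (coming from the polynomial part) acting on the right — all of which lie in $\rat(D,Z,\vec\xi_k) = \mathrm{colspan}(W)$. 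Since this is precisely the content of \cite[Theorem~4.5]{beckermann2021low} (and its supporting lemmas on rational Krylov invariance), I would cite that result for the details rather than reprove the invariance from scratch; the closure of $\vec\xi_k$ under complex conjugation is what guarantees $W$ can be taken real, keeping everything within $\R^{n\times m}$.
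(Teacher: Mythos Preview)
The paper does not prove this theorem; it is quoted verbatim from \cite[Theorem~4.5]{beckermann2021low} and used as a black box, so there is no in-paper proof to compare against. Your plan to reproduce the two-step argument from the cited reference (exactness for $r\in\pol_k/q_k$, then the four-term triangle-inequality bound for $g=f-r^\ast$ using Cauchy interlacing) is exactly the structure of the original proof, and the second step is carried out correctly.

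The one place where your sketch is thinner than the actual argument is the exactness step. You write $r(A)-r(D)=WMW^T$ and then assert $M=W^T(r(A)-r(D))W=r(W^TAW)-r(W^TDW)$ ``because $WW^T$ acts as the identity on the relevant subspace so that $f$ commutes with compression.'' The first equality is fine, but the second does \emph{not} follow from a general commutation-with-compression principle: in general $W^T r(A)W\neq r(W^TAW)$ unless $\mathrm{colspan}(W)$ is $A$-invariant, which it is not here. What actually makes it work is the pair of identities $W^T(D-\xi_j I)^{-1}Z=(W^TDW-\xi_j I)^{-1}W^TZ$ and the analogous one with $A$ (the latter using that $\rat(A,Z,\vec\xi_k)=\rat(D,Z,\vec\xi_k)$, a consequence of $A-D=Z\tilde AZ^T$); plugging these into the resolvent-identity expansion of $r(A)-r(D)$ yields the claimed formula term by term. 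You correctly flag this as the main obstacle and propose to cite the supporting lemmas in \cite{beckermann2021low}, which is the appropriate resolution---just be aware that the justification you wrote is not the right mechanism.
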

 
    We will apply Theorem~\ref{thm:BCKS} recursively to telescopic decompositions. In order to do so conveniently, we slightly loosen our assumptions on a standard telescopic decomposition. Given a cluster tree $\T$ associated with $[1,\dots,n]$ we assume that a matrix $A\in \R^{n\times n}$ can be written as 
    \begin{equation} \label{eq:telescopic-nonorthogonal}
        A = \vec{\tilde D}^{(L)} + \vec Z^{(L)} {A}^{(L-1)} (\vec Z^{(L)})^T, 
    \end{equation}
    where:
    \begin{itemize}
        \item ${A}^{(L-1)}$ admits a standard symmetric 
          telescopic decomposition $\{ U_\tau, D_\tau \}_{\tau \in \T_{2r}^{(L-1)}}$;
        \item $\vec {\tilde D}^{(L)} = \blkdiag( \tilde D_\tau \ : \ \tau\ 
          \mathrm{leafnode\ in}\ \T)$, and $\tilde D_\tau = A_{\tau \tau}$;
        \item $\vec Z^{(L)} = \blkdiag( Z_\tau \ : \ \tau\ 
          \mathrm{leafnode\ in}\ \T)$, with $Z_\tau \in \mathbb R^{|\tau| \times r}$.
    \end{itemize}
    The key difference to assuming that $A$ has a standard symmetric 
    telescopic decomposition is that no orthogonality is enforced on
    $\vec Z^{(L)}$, the factors on the leaf level. 
    The key advantage of~\eqref{eq:telescopic-nonorthogonal} is that it remains unaffected when multiplying with certain block diagonal matrices.
    The following proposition additionally shows how to move one level up.

  \begin{proposition} \label{prop:nonorthogonal-telescopic}
      Let $A$ be an $n \times n$ matrix admitting the decomposition~\eqref{eq:telescopic-nonorthogonal}, and 
    $\vec{W}^{(L)} = \blkdiag(W_{\tau} \ : \ \tau\ \mathrm{leaf \ node})$ with 
    $W_{\tau}\in \R^{|{\tau}|\times m}$. Then, 
    \[
      (\vec{W}^{(L)})^T A \vec{W}^{(L)} = 
        \tilde{\vec D}^{(L-1)} + 
        \vec Z^{(L-1)} {A}^{(L-2)} (\vec Z^{(L-1)})^T, 
    \]
    with the block diagonal matrices
    \begin{equation*}
        \tilde{\vec{D}}^{(L-1)} = \blkdiag(\tilde D_{\tau}:\mathrm{depth}(\tau)=L-1), \quad {\vec{Z}}^{(L-1)} = \blkdiag(Z_{\tau}:\mathrm{depth}(\tau)=L-1)
    \end{equation*}
    containing the diagonal blocks
    \begin{align*}
        Z_\tau &= \begin{bmatrix}
            W_{\alpha}^T Z_{\beta} \\
            & W_{\alpha}^T Z_{\beta} \\ 
        \end{bmatrix} U_\tau \in \R^{2m\times r}, \\
        \tilde D_\tau &= \begin{bmatrix}
            W_{\alpha}^T \tilde D_{\alpha} W_{\alpha} \\ 
            & W_{\beta}^T \tilde D_{\beta} W_{\beta} 
        \end{bmatrix} + \begin{bmatrix}
            W_{\alpha}^T Z_{\alpha} \\
            & W_{\beta}^T Z_{\beta} \\ 
        \end{bmatrix} D_\tau \begin{bmatrix}
            W_{\alpha}^T Z_{\alpha} \\
            & W_{\beta}^T Z_{\beta} \\ 
        \end{bmatrix}^T \in \R^{2m\times 2m},
    \end{align*}
    and the matrix ${\vec{A}}^{(L-2)}$ generated by the standard telescopic decomposition $\{U_{\tau}, D_{\tau}\}_{\tau \in \T_{2r}^{(L-2)}}$, where the matrices $U_{\tau}$ and $D_{\tau}$ stem from the telescopic decomposition of $A^{(L-1)}$. Moreover, $\tilde D_{\tau} = A_{\tau,\tau}$ holds for each leaf node $\tau\in \T^{(L-1)}_{2m}$.
  \end{proposition}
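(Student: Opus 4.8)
The plan is to conjugate the decomposition~\eqref{eq:telescopic-nonorthogonal} by $\vec W^{(L)}$, exploit that every factor living at the leaf level of $\T$ is block diagonal, and then push the telescopic relation of $A^{(L-1)}$ up by one level. First I would note that $\vec W^{(L)}$, $\vec{\tilde D}^{(L)}$ and $\vec Z^{(L)}$ are all block diagonal with diagonal blocks indexed by the leaves of $\T$, so conjugation acts block by block and gives
\begin{equation*}
    (\vec W^{(L)})^T A \vec W^{(L)} = \blkdiag\big(W_\tau^T \tilde D_\tau W_\tau\big) + \vec H\, A^{(L-1)}\, \vec H^T, \qquad \vec H := (\vec W^{(L)})^T \vec Z^{(L)} = \blkdiag\big(W_\tau^T Z_\tau\big),
\end{equation*}
with both block diagonal matrices running over the leaves $\tau$ of $\T$. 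Since $A^{(L-1)}$ comes with a standard symmetric telescopic decomposition on $\T_{2r}^{(L-1)}$, the symmetric form of the telescopic relation~\eqref{eqn:telescop} reads $A^{(L-1)} = \vec D^{(L-1)} + \vec U^{(L-1)} A^{(L-2)} (\vec U^{(L-1)})^T$ with $A^{(L-2)}$ generated by $\{U_\tau, D_\tau\}_{\tau\in\T_{2r}^{(L-2)}}$, and I would substitute this into the identity above.

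What then remains is bookkeeping: regroup the $2^L$ leaf blocks of $\T$ into the $2^{L-1}$ sibling pairs, one per node $\tau$ of depth $L-1$. For such $\tau$ with children $\alpha,\beta$, the $(\tau,\tau)$ block of $\blkdiag(W_\tau^T\tilde D_\tau W_\tau) + \vec H\,\vec D^{(L-1)}\,\vec H^T$ is exactly the claimed $\tilde D_\tau$, while $\vec H\,\vec U^{(L-1)}$ is block diagonal with $(\tau,\tau)$ block $\blkdiag(W_\alpha^T Z_\alpha, W_\beta^T Z_\beta)\,U_\tau$, i.e.\ the claimed $Z_\tau$; this yields the stated identity $(\vec W^{(L)})^T A \vec W^{(L)} = \tilde{\vec D}^{(L-1)} + \vec Z^{(L-1)} A^{(L-2)} (\vec Z^{(L-1)})^T$. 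The decomposition $\{U_\tau, D_\tau\}_{\tau\in\T_{2r}^{(L-2)}}$ of $A^{(L-2)}$ is again standard, because it is a ``truncation'' of the standard decomposition of $A^{(L-1)}$ and condition~\eqref{prop:condition2} at the relevant (lower) levels is inherited verbatim.

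The only genuinely non-routine point — and the step I expect to need the most care — is the final claim $\tilde D_\tau = \big[(\vec W^{(L)})^T A \vec W^{(L)}\big]_{\tau,\tau}$ for every node $\tau$ of depth $L-1$. From the identity just obtained this equals $\tilde D_\tau + Z_\tau\, G_\tau\, Z_\tau^T$, where $G_\tau\in\R^{r\times r}$ is the diagonal sub-block of $A^{(L-2)}$ indexed by $\tau$, so it suffices to show $G_\tau = 0$. Writing $\sigma$ for the parent of $\tau$, which has depth $L-2$ and is hence a leaf of $\T_{2r}^{(L-2)}$ but a non-leaf of $\T_{2r}^{(L-1)}$, I would combine $(A^{(L-2)})_{\sigma,\sigma} = D_\sigma$ (from~\eqref{prop:condition2} for the standard decomposition of $A^{(L-1)}$; for $L=2$ simply $A^{(L-2)} = D_\sigma$ with $\sigma$ the root of $\T_{2r}^{(1)}$) with the fact that $D_\sigma$ has vanishing diagonal blocks, which is precisely Proposition~\ref{prop:tel_to_hss} applied to the standard decomposition of $A^{(L-1)}$ at the non-leaf node $\sigma$. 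Then $G_\tau$, being a diagonal sub-block of $(A^{(L-2)})_{\sigma,\sigma}=D_\sigma$, vanishes. Conceptually this is the crux of the proof: the non-orthogonal leaf-level form~\eqref{eq:telescopic-nonorthogonal} survives $\vec W^{(L)}$-conjugation precisely because, by Proposition~\ref{prop:tel_to_hss}, the blocks $D_\sigma$ attached to non-leaf nodes of a standard telescopic decomposition are zero on the diagonal, which is what forces the new leaf blocks $\tilde D_\tau$ to coincide with the diagonal blocks of the compressed matrix.
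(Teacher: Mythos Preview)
Your argument is correct and follows the same route as the paper: conjugate~\eqref{eq:telescopic-nonorthogonal} by $\vec W^{(L)}$, insert the telescopic relation $A^{(L-1)}=\vec D^{(L-1)}+\vec U^{(L-1)}A^{(L-2)}(\vec U^{(L-1)})^T$, and regroup sibling leaves into depth-$(L-1)$ blocks. Your treatment of the final claim $\tilde D_\tau=\big[(\vec W^{(L)})^TA\vec W^{(L)}\big]_{\tau,\tau}$ via $G_\tau=0$ and Proposition~\ref{prop:tel_to_hss} is in fact more explicit than the paper's one-line appeal to standardness, and supplies exactly the missing ingredient (that the diagonal $r\times r$ blocks of $D_\sigma$ vanish for non-leaf $\sigma$) that the paper leaves implicit.
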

  \begin{proof}
    Considering \eqref{eq:telescopic-nonorthogonal} and applying the telescopic decomposition~\eqref{eqn:telescop} of the matrix $A^{(L-1)}$ we get
    \begin{align*}
        (\vec{W}^{(L)})^T A \vec{W}^{(L)} &= (\vec{W}^{(L)})^T\left(\vec{\tilde D}^{(L)}  + \vec Z^{(L)} (\vec D^{(L-1)}+\vec U^{(L-1)} A^{(L-2)} (\vec U^{(L-1)}))^T(\vec Z^{(L)})^T\right)\vec{W}^{(L)} \\
        &= \vec{\tilde D}^{(L-1)} + 
        \vec Z^{(L-1)} {A}^{(L-2)} (\vec Z^{(L-1)})^T
    \end{align*}
    with $\vec D^{(L-1)},\vec U^{(L-1)}$ defined from $\{D_{\tau}\}$, $\{U_{\tau}\}$, in accordance with \eqref{eqn:def-blkdiag}. Moreover, because $\{U_{\tau},D_{\tau}\}_{\tau\in\T_{2r}^{(L-1)}}$ is a standard decomposition of $A^{(L-1)}$, the relation $\tilde D_{\tau} = A_{\tau,\tau}$ holds for each leaf node $\tau\in \T^{(L-1)}_{2m}$.   \end{proof}

To approximate $f(A)$ for a matrix $A$ admitting the decomposition~\eqref{eq:telescopic-nonorthogonal}, we use the construction of Theorem~\ref{thm:BCKS}
    with $D = \tilde{\vec{D}}^{(L)}$ and 
    $Z = \vec Z^{(L)}$, which yields the approximation
    \begin{equation} \label{eq:frecursive}
        f(A) \approx f(\tilde{\vec{D}}^{(L)}) + 
          \vec{W}^{(L)} \left[
            f\big((\vec{W}^{(L)})^T A \vec{W}^{(L)}\big) - f\big((\vec{W}^{(L)})^T \tilde{\vec{D}}^{(L)} \vec{W}^{(L)}\big)
          \right] (\vec{W}^{(L)})^T, 
    \end{equation}
    where $\vec{W}^{(L)}$ is an orthogonal basis for 
    $\rat(\tilde{\vec{D}}^{(L)},\vec Z^{(L)}, \vec{\xi}_k)$. 
    Note that the three evaluations of $f$ are all well defined because $\tilde{\vec{D}}^{(L)}$ contains diagonal blocks of $A$ and 
    $(\vec{W}^{(L)})^T \tilde{\vec{D}}^{(L)} \vec{W}^{(L)}$, $(\vec{W}^{(L)})^T A \vec{W}^{(L)}$ are
    orthogonal compressions. By eigenvalue interlacing, the spectra of these three matrices 
    are contained in $[\lambda_{\min}, \lambda_{\max}]$.
    We now make 
    two observations:
    \begin{enumerate}[(i)]
        \item Proposition~\ref{prop:blkdiag_krylov} implies that the $n \times 2^L rk$ matrix $\vec{W}^{(L)}$ takes the form
           \[
                \vec{W}^{(L)} = \blkdiag(
                    W_{\tau} \ : \ 
                    \tau\ \mathrm{leaf\ of}\ \T
                ), 
                \ \mathrm{with}\ 
                W_{\tau} \in \R^{|\tau| \times rk} \text{ orthonormal basis of }
                \rat(\tilde D_\tau, Z_\tau, \vec{\xi}_k). 
            \]
        \item Proposition~\ref{prop:nonorthogonal-telescopic} implies that 
        $B^{(L-1)} := 
          (\vec{W}^{(L)})^T A \vec{W}^{(L)}$ admits the  
          decomposition
          \begin{equation} \label{eq:blm1}
            B^{(L-1)} = 
              \tilde{\vec D}^{(L-1)} + 
              \vec Z^{(L-1)} {A}^{(L-2)} (\vec Z^{(L-1)})^T. 
          \end{equation}
    \end{enumerate}

    In~\eqref{eq:frecursive}, the function $f$ needs to be evaluated for three matrices. This is cheap for 
    $\tilde{\vec{D}}^{(L)}$ and 
    $\vec{W}^{(L)})^T \tilde{\vec{D}}^{(L)} \vec{W}^{(L)}$ because these matrices are block diagonal with small diagonal blocks, for which the evaluation of $f$ is computed explicitly.
    The expensive part is the evaluation of $f$ for $B^{(L-1)} = 
          (\vec{W}^{(L)})^T A \vec{W}^{(L)}$. For this purpose, we use the decomposition~\eqref{eq:blm1} and apply the approximation~\eqref{eq:frecursive} again:
    \begin{align*}
        f(B^{(L-1)}) 
          &\approx f(
            \tilde {\vec D}^{(L-1)}) 
            + \vec W^{(L-1)} \left[
            f(B^{(L-2)}) - 
            f( 
             (\vec W^{(L-1)})^T 
             \tilde{\vec{D}}^{(L-1)}
             \vec W^{(L-1)}
             )  
          \right]
          (\vec W^{(L-1)})^T, 
    \end{align*}
    where $B^{(L-2)} := (\vec W^{(L-1)})^T B^{(L-1)} \vec W^{(L-1)}$
    and 
     \[
        \vec{W}^{(L-1)} = \blkdiag(
            W_{\tau} \ : \ 
            \tau\ \mathrm{leaf\ of}\ \T_{2rk}^{(L-1)}
        ), 
        \ \mathrm{with}\ 
        W_{\tau} \text{ orthonormal basis of }
        \rat(\tilde D_\tau,  Z_\tau, \vec{\xi}_k). 
    \]  
    Since $\tilde{\vec{D}}^{(L-1)}$ contains diagonal blocks 
    of $B^{(L-1)}$, the three evaluations of $f$ are again well defined.
    
    The procedure described above is repeated recursively until reaching a tree of depth $0$, 
    and at that point one simply computes
    the matrix function of the corresponding 
    dense matrix of size $2rk \times 2rk$ explicitly. The $i$th step of the recursive procedure proceeds as follows: one assumes a decomposition of the form
    \begin{equation}\label{eqn:Bi}
            B^{(L-i)} = 
              \tilde{\vec D}^{(L-i)} + 
              \vec Z^{(L-i)} {A}^{(L-i-1)} (\vec Z^{(L-i)})^T,
    \end{equation}
    where $\tilde{\vec D}^{(L-i)}$, $\vec Z^{(L-i)}$ are block diagonal matrices defined from $\{\tilde{D}_{\tau}\}$, $\{Z_{\tau}\}$, in accordance with~\eqref{eqn:def-blkdiag}, and ${A}^{(L-i-1)}$ admits a standard symmetric telescopic decomposition $\{U_{\tau},D_{\tau}\}_{\tau \in \T^{(L-i-1)}_{2r}}$. Then $f(B^{(L-i)})$ is approximated by 
    \begin{equation}\label{eqn:FBi}
            f(\tilde {\vec D}^{(L-i)} )
            + \vec W^{(L-i)} \left[
            f(B^{(L-i-1)}) - 
            f( 
             (\vec W^{(L-i)})^T 
             \tilde{\vec{D}}^{(L-i)}
             \vec W^{(L-i)}
             )  
          \right]
          (\vec W^{(L-i)})^T, 
    \end{equation}
    where $B^{(L-i-1)} := (\vec W^{(L-i)})^T B^{(L-i)} \vec W^{(L-i)}$
    and 
     \[
        \vec{W}^{(L-i)} = \blkdiag(
            W_{\tau} \ : \ 
            \tau\ \mathrm{leaf\ of}\ \T_{2rk}^{(L-i)}
        ), 
        \ \mathrm{with}\ 
        W_{\tau} \text{ orthonormal basis of }
        \rat(\tilde D_\tau,  Z_\tau, \vec{\xi}_k),
    \] 
    explicitly computing $f(\tilde {\vec D}^{(L-i)} )$, $ 
    f( 
     (\vec W^{(L-i)})^T 
     \tilde{\vec{D}}^{(L-i)}
     \vec W^{(L-i)}
    )$ and recursively approximating $f(B^{(L-i-1)})$. In particular, the procedure can be iterated considering the decomposition
    \begin{equation*}
        B^{(L-i-1)} = 
          \tilde{\vec D}^{(L-i-1)} + 
          \vec Z^{(L-i-1)} {A}^{(L-i-2)} (\vec Z^{(L-i-1)})^T,
    \end{equation*}
    given by Proposition~\ref{prop:nonorthogonal-telescopic}.
    
    Assuming that $A$ admits a standard (symmetric) telescopic decomposition $\{U_{\tau},D_{\tau}\}$, the described procedure starts by taking $Z_{\tau}=U_{\tau}$ and $\tilde D_{\tau}=D_{\tau}$ for each leaf node $\tau$. It results in a 
    telescopic decomposition for an 
    approximation of $f(A)$, with generators 
    $\{ W_\tau, C_{\tau} \}$; the generators $W_{\tau}$ are defined by the 
    rational Krylov subspaces constructed throughout the process, 
    whereas $C_{\tau}$ takes the form
    \begin{equation*} 
        C_{\tau}:=\begin{cases}
        f(\tilde D_{\tau}) &\,\mathrm{ if } \,\tau \, \mathrm{ is \, the \, root \, node}\\
        f(\tilde D_{\tau})-W_{\tau}f(W_{\tau}^T\tilde D_{\tau} W_{\tau}) W_{\tau}^T \quad & \, \mathrm{ otherwise.}
    \end{cases}
    \end{equation*}
    The procedure is summarized in Algorithm~\ref{Alg:main2}.
    
    Let us emphasize that even if the decomposition\eqref{eq:telescopic-nonorthogonal} is used in the course of the algorithm, 
    the final result has a symmetric telescopic decomposition in the sense of Definition~\ref{def:symm-tel-dec}. The 
    non-orthogonal factors $\vec{Z}^{(\ell)}$ are only needed to represent intermediate stages.
   
    \begin{algorithm}
        \begin{algorithmic}
            \Require{$\{U_{\tau},D_{\tau}\}$ standard symmetric telescopic decomposition of matrix $A$, function $f$, list of poles $\vec {\xi}_k= (\xi_0, \dots, \xi_{k-1})\subseteq \C\cup\{\infty\}$ closed under complex conjugation}
            \Ensure{ $\{W_{\tau},C_{\tau}\}$ telescopic factorization of an approximation to $f(A)$}
        
        \For{$\ell = L, L-1, \dots, 0$}
        \For {each node $\tau$ of depth $\ell$}
        \If {$\ell =L$}
        \State $Z_{\tau}\gets U_{\tau}$ 
        \State $\tilde D_{\tau}\gets D_{\tau}$
        \Else 
        \State Let $\alpha$ and $\beta$ be the children of $\tau$
            \State $Z_{\tau}\gets\left[\begin{smallmatrix} W_{\alpha}^T Z_{\alpha}\\ &W_{\beta}^T Z_{\beta}\end{smallmatrix}\right]U_{\tau}$
            \State $\tilde D_{\tau}\gets\left[\begin{smallmatrix}
                W_{\alpha}^T\tilde D_{\alpha}W_{\alpha}\\&W_{\beta}^T\tilde D_{\beta}W_{\beta}
            \end{smallmatrix}\right]+\left[\begin{smallmatrix} W_{\alpha}^T Z_{\alpha}\\ &W_{\beta}^T Z_{\beta}\end{smallmatrix}\right] D_{\tau} \left[\begin{smallmatrix} Z_{\alpha}^TW_{\alpha}\\ &Z_{\beta}^TW_{\beta}\end{smallmatrix}\right]$
            \EndIf
            \If{$\ell =0$}
            \State $C_{\tau}\gets f(\tilde D_{\tau})$
            \Else
            \State $W_{\tau}\gets$ orthonormal basis of $\rat(\tilde D_{\tau}, Z_{\tau},\vec {\xi}_k)$ 
           
            \State $C_{\tau}\gets f(\tilde D_{\tau})-W_{\tau}f(W_{\tau}^T\tilde D_{\tau}W_{\tau})W_{\tau}^T$
        \EndIf
        \EndFor
        \EndFor
        
    \end{algorithmic}
    
        \caption{Computation of $f(A)$ for symmetric HSS matrix $A$ in telescopic decomposition.} \label{Alg:main2}
    
    \end{algorithm}

    To discuss the complexity of Algorithm~\ref{Alg:main2}, let $r$ denote the HSS/telescopic rank of $A$, assume that $n = 2^L t$, with the threshold size $t \sim kr$, and that the cluster tree is balanced: $\T = \T_{t}^{(L)}$.
    On the leaf level $L$, the computation of all $W_\tau$ and $C_\tau$ requires $\mathcal O(2^L ( t^3 +  k r t^2  ) ) = \mathcal O( n k^2 r^2)$ operations.
    On level $\ell < L$, the complexity is $\mathcal O(2^{\ell} k^3 r^3 ) = \mathcal O( 2^{-(L-\ell)} n k^2 r^2 )$. This gives a total complexity of 
    \begin{equation} \label{eq:complexity}
     \mathcal O( n k^2 r^2),
    \end{equation}
    which is linear in $n$ \emph{if} both $r$ and $k$ are considered constant.

    We conclude this section with a result that bounds the approximation error of Algorithm~\ref{Alg:main2} by
    the rational approximation error of $f$ on $[\lambda_{\min},\lambda_{\max}]$. 

    \begin{theorem}\label{thm:error_bound}
    With the notation introduced in Theorem~\ref{thm:BCKS}, let $A$ be a symmetric HSS matrix associated with a cluster tree $\T$ of depth $L$.
    Let $f$ be analytic on an interval $[\lambda_{\min},\lambda_{\max}]$ containing the eigenvalues of $A$.
    Letting $E(f)$ denote the difference between $f(A)$ and the ouput of Algorithm~\ref{Alg:main2} applied to a standard decomposition of $A$, it holds that
        \begin{equation*}
            \norm{E(f)}_2\le 4L\min_{r\in \pol_{k}/q_k}\norm{f-r}_{\infty}.
        \end{equation*}
    \end{theorem}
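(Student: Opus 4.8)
The plan is to unwind the recursion defining Algorithm~\ref{Alg:main2} and control the error accumulated at each of the $L$ levels by a single application of Theorem~\ref{thm:BCKS}, then combine the level-wise bounds by a triangle-inequality argument that exploits the orthonormality of the block-diagonal matrices $\vec W^{(\ell)}$. Concretely, let $\mathcal A_f(B^{(L-i)})$ denote the output of the recursive procedure \eqref{eqn:FBi} applied to the matrix $B^{(L-i)}$ admitting a decomposition of the form \eqref{eqn:Bi}, so that the algorithm's output is $\mathcal A_f(B^{(L)})$ with $B^{(L)} = A$. I would prove by downward induction on $i$ (from $i=L$, the base case where $\mathcal A_f$ computes $f$ exactly on a $2rk\times 2rk$ dense matrix, down to $i=0$) the bound
\begin{equation*}
    \norm{\mathcal A_f(B^{(L-i)}) - f(B^{(L-i)})}_2 \le 4(L-i)\min_{r\in\pol_k/q_k}\norm{f-r}_\infty.
\end{equation*}

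For the inductive step, I would split the error $\mathcal A_f(B^{(L-i)}) - f(B^{(L-i)})$ into two pieces by inserting the ``exact'' intermediate quantity $\hat F^{(L-i)} := f(\tilde{\vec D}^{(L-i)}) + \vec W^{(L-i)}\big(f(B^{(L-i-1)}) - f((\vec W^{(L-i)})^T\tilde{\vec D}^{(L-i)}\vec W^{(L-i)})\big)(\vec W^{(L-i)})^T$, i.e.\ the approximation of Theorem~\ref{thm:BCKS} with the \emph{true} $f(B^{(L-i-1)})$ rather than the recursively computed one. The first piece, $\hat F^{(L-i)} - f(B^{(L-i)})$, is exactly the quantity $E(f)$ of Theorem~\ref{thm:BCKS} applied with $D = \tilde{\vec D}^{(L-i)}$, $Z = \vec Z^{(L-i)}$, $\tilde A = A^{(L-i-1)}$ and $W = \vec W^{(L-i)}$; note the hypotheses hold since $\tilde{\vec D}^{(L-i)}$ collects diagonal blocks of $B^{(L-i)}$ (Proposition~\ref{prop:nonorthogonal-telescopic}) so that eigenvalue interlacing places all relevant spectra in $[\lambda_{\min},\lambda_{\max}]$. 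Hence this piece is bounded by $4\min_{r\in\pol_k/q_k}\norm{f-r}_\infty$. The second piece is $\mathcal A_f(B^{(L-i)}) - \hat F^{(L-i)} = \vec W^{(L-i)}\big(\mathcal A_f(B^{(L-i-1)}) - f(B^{(L-i-1)})\big)(\vec W^{(L-i)})^T$; since $\vec W^{(L-i)}$ has orthonormal columns, its spectral norm equals $\norm{\mathcal A_f(B^{(L-i-1)}) - f(B^{(L-i-1)})}_2$, which by the induction hypothesis is at most $4(L-i-1)\min_{r\in\pol_k/q_k}\norm{f-r}_\infty$. Adding the two pieces via the triangle inequality yields $4(L-i)\min_{r\in\pol_k/q_k}\norm{f-r}_\infty$, completing the induction; taking $i=0$ gives the theorem.

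The main technical obstacle, and the point requiring care, is the bookkeeping needed to ensure that at every level the decomposition \eqref{eqn:Bi} is genuinely available with all matrices symmetric and with spectra confined to $[\lambda_{\min},\lambda_{\max}]$, so that Theorem~\ref{thm:BCKS} legitimately applies. This rests on two facts already established: Proposition~\ref{prop:nonorthogonal-telescopic}, which propagates the decomposition \eqref{eq:telescopic-nonorthogonal}/\eqref{eqn:Bi} one level up while certifying $\tilde D_\tau = (B^{(L-i)})_{\tau,\tau}$, and the eigenvalue interlacing argument (applied to the orthogonal compressions $(\vec W^{(\ell)})^T\,\cdot\,\vec W^{(\ell)}$ and to the extraction of diagonal blocks) guaranteeing the spectral containment used in the error estimate of Theorem~\ref{thm:BCKS}. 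A minor additional point is that Proposition~\ref{prop:blkdiag_krylov} is what makes $\vec W^{(\ell)}$ block-diagonal with orthonormal columns, which is exactly what is used both to keep the norm identity $\norm{\vec W^{(\ell)} M (\vec W^{(\ell)})^T}_2 = \norm{M}_2$ and to identify $\mathcal A_f$ with the concrete output $\{W_\tau, C_\tau\}$ of Algorithm~\ref{Alg:main2}. Once these structural invariants are in place, the error analysis itself is the short telescoping argument sketched above, and the factor $L$ arises precisely as the number of recursion levels, each contributing one factor of $4\min_{r\in\pol_k/q_k}\norm{f-r}_\infty$.
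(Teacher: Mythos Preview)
Your proposal is correct and follows essentially the same approach as the paper's proof: both decompose the total error into level-wise contributions, bound each contribution by $4\min_{r\in\pol_k/q_k}\norm{f-r}_\infty$ via Theorem~\ref{thm:BCKS}, and combine them using the orthonormality of $\vec W^{(L-i)}$. The only cosmetic difference is that the paper unrolls the recursion into an explicit telescoping sum $\norm{E(f)}_2\le\sum_{i=0}^{L-1}\norm{E^{(L-i)}(f)}_2$, whereas you phrase the same computation as a downward induction on $i$; the underlying argument and the technical ingredients (Proposition~\ref{prop:nonorthogonal-telescopic}, interlacing, Proposition~\ref{prop:blkdiag_krylov}) are identical.
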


    \begin{proof} Let $\{W_{\tau},C_{\tau}\}_{\tau\in\T}$ be the telescopic decomposition returned by Algorithm~\ref{Alg:main2} applied to a standard symmetric telescopic decomposition $\{U_{\tau},D_{\tau}\}_{\tau\in\T}$. For each $1\le i \le L-1$ let $B^{(L-i)}$ be the matrices defined in \eqref{eqn:Bi}, and for each leaf node $\tau\in \T^{(L-i)}_{2rk}$, let $\tilde D_{\tau}= B^{(L-i)}_{\tau,\tau}$. Moreover, to streamline the notation, we let $B^{(L)}:=A$ and $\tilde{D}_{\tau}:=D_{\tau}$ for each leaf node $\tau\in \T$. For each $i$, let $E^{(L-i)}(f)$ be the difference between $f(B^{(L-i)})$ and its approximation~\eqref{eqn:FBi}. Since $[\lambda_{\min}, \lambda_{\max}]$ contains the eigenvalues of $B^{(L-i)}$ and $\tilde D_{\tau}$ for each $\tau$, Theorem~\ref{thm:BCKS} implies 
        \begin{equation} \label{eq:intermbound}
            \norm{E^{(L-i)}(f)}_2\le 4 \min_{r\in \pol_{k}/q_k}\norm{f-r}_\infty.
        \end{equation}
        Denoting by $F^{(L-i)}$ the matrix generated by the symmetric telescopic decomposition $\{W_{\tau},C_{\tau}\}_{\tau \in \T^{L-i}_{2rk}}$ for $0\le i\le L$, we have
        \begin{equation*}
            f(B^{(L-i)})-F^{(L-i)}=
            \begin{cases}
                0 &\mathrm{ if }\, i=L;\\
                f(B^{(L-i)})-\vec C^{(L-i)}-\vec W^{(L-i)}F^{(L-i-1)}(\vec W^{(L-i-1)})^T &\mathrm{otherwise},
            \end{cases}
        \end{equation*}
         where $\vec C^{(L-i)}$, $\vec W^{(L-i)}$ are the block diagonal matrices defined by $\{C_{\tau}\}$, $\{W_{\tau}\}$, in accordance with \eqref{eqn:def-blkdiag}. For $i<L$ we observe that 
        \begin{equation*}
            f(B^{(L-i)})-\vec  C^{(L-i)}=\vec W^{(L-i)}f(B^{(L-i-1)})(\vec W^{(L-i)})^T+E^{(L-i)}(f),
        \end{equation*}
        and, hence,
        \begin{equation*}
            f(B^{(L-i)})-F^{(L-i)}=\vec W^{(L-i)}\big(f(B^{(L-i-1)})-F^{(L-i-1)}\big)(\vec W^{(L-i)})^T+E^{(L-i)}(f).
        \end{equation*}
        Using that that the matrices $\vec W^{(L-i)}$ have orthonormal columns, this implies
        \begin{equation*}
            \norm{E(f)}_2=\norm{f(B^{(L)})-F^{(L)}}_2\le \sum_{i=0}^{L-1} \norm{E^{(L-i)}(f)}_2,
        \end{equation*}
        which concludes the proof after applying the inequality~\eqref{eq:intermbound}.   
    \end{proof}

\section{Pole selection}\label{sec:pole_selection}

Theorem~\ref{thm:error_bound} shows that the choice of poles in the rational Krylov subspaces $\rat(\tilde D_{\tau}, Z_{\tau},\vec {\xi}_k)$ is critical to the convergence of Algorithm~\ref{Alg:main2}. Normally, repeated poles are preferred to reduce the cost of solving the shifted linear systems needed for constructing a basis of the subspace~\cite{guttel2013rational}. However, such considerations do not apply to Algorithm~\ref{Alg:main2}; solving linear systems with the small matrix $\tilde D_{\tau}$ (shifted by a pole) is cheap. 

By Theorem~\ref{thm:error_bound}, if $f$ is an analytic function on an interval $[a,b]$ that contains the eigenvalues of $A$ and the list of poles $\vec \xi_k$ satisfies
\begin{equation} \label{eq:errorboundpoles}
    \min_{r\in \pol_{k}/q_k}\norm{f-r}_\infty\le \frac {\epsilon}{4L}
\end{equation}
then Algorithm~\ref{Alg:main2} returns an approximation of $f(A)$ within an error bounded by a user-specified tolerance $\epsilon>0$. In the following, we describe explicit pole selection strategies that ensure~\eqref{eq:errorboundpoles} for two important classes of functions. For general $f$, general rational approximation methods, like the AAA algorithm~\cite{nakatsukasa2018aaa}, can be used to select the poles.

\subsection{Exponential function}

In the context of the matrix exponential, it is not uncommon to use polynomial approximations, that is, all poles are infinite. However, the corresponding (polynomial) Krylov subspace methods often converge poorly when the spectrum is wide, that is, $a \ll b$; see~\cite{beckermann2009error,Hochbruck1997} for theoretical results. As their computational overhead is small in our setting, it is preferable to use rational approximations/Krylov subspaces. Assuming $b\le 0$ (which can always be attained by shifting the matrix), it is well known~\cite{Gonchar1987}
that for every $k$ there exists a list of poles $\vec \xi_k$ and a (universal) constant $C$ such that
\begin{equation*} 
    \min_{r\in \pol_{k}/q_k}\norm{f-r}_\infty \le C K_e^{-k},\quad  K_e \approx 9.289.
\end{equation*}
In turn, this means that it suffices to choose
\begin{equation*}
    k\ge  \log\big( 4LC \epsilon^{-1} \big) \big/\log(K_e)
\end{equation*}
such that Algorithm~\ref{Alg:main2} applied to a symmetric negative semi-definite HSS matrix $A$ with non-positive eigenvalues returns an approximation with an error $\epsilon$.
In particular, note that these estimates are independent of the width of the spectrum. Following~\eqref{eq:complexity}, this gives
a complexity of \[ \mathcal O( n (\log \log n + \log \epsilon^{-1} )^2 r^2),\] where $r$ is the telescopic/HSS rank of $A$. For example, this implies that the fixed-accuracy approximation to the exponential of \emph{any} tridiagonal symmetric negative semi-definite matrix $A$ has nearly linear complexity $\mathcal O( n ( \log \log n )^2 )$. We are not aware of any other algorithm that can achieve this. 

\subsection{Markov functions}

 Following the exposition in~\cite{beckermann2021low}, we discuss pole selection for Markov functions, i.e., functions that can be represented as
\begin{equation}\label{eqn:Markov_fun}
    f(z)=\int_{\alpha}^{\beta} \frac{\mathrm{d}\mu(x)}{z-x}
\end{equation}
for some positive measure $\mu(x)$ and $-\infty \le \alpha < \beta < \infty.$ Important examples of functions in this class are
\begin{equation*}
    \frac{\log(1 + z)}{z}=\int_{-\infty}^{-1}\frac{-1/x}{z-x}\,\mathrm{d}x \quad \,\mathrm { and } \,\quad z^\gamma=\frac{\sin(\pi\gamma)}{\pi}\int_{-\infty}^{0}\frac{|x|^\gamma}{z-x}\,\mathrm{d}x,
\end{equation*}
with $-1<\gamma<0$.

Now, let $f$ be a Markov function~\eqref{eqn:Markov_fun} with $\beta\le 0$ and let $A$ be a symmetric positive definite HSS matrix, with the smallest and largest eigenvalues given by $a>0$ and $b\ge a$, respectively.
The quasi-optimal rational approximation of $f$ has been discussed in, e.g.,~\cite[Section~6.2]{beckermann2009error}, which for every $k$ provides a list of poles $\vec \xi_k$ such that 
\begin{equation*}
    \min_{r\in \pol_{k}/q_k}\norm{f-r}_\infty\le 4\norm{f}_{\infty}\exp\left(-k\frac {\pi^2}{\log(16b/a)}\right).
\end{equation*}
Hence, to achieve accuracy $\epsilon$ in Algorithm~\ref{Alg:main2}, one can choose
\begin{equation}\label{eqn:bound-markov}
    k\ge \log\big( 16L\norm{f}_{\infty} \epsilon^{-1} \big) \log(16b/a) / \pi^2.
\end{equation}
Assuming a polynomial growth of the condition number $b/a$ with respect to $n$, the complexity of the algorithm for Markov functions is, according to~\eqref{eq:complexity}, given by
\[ \mathcal O( n \log^2 n (\log \log n + \log \epsilon^{-1} + \log \norm{f}_{\infty} )^2 r^2). \] 
For example, the fixed-accuracy approximation to the inverse square root $A^{-1/2}$ requires nearly $\mathcal O( n \log^4 n)$ operations, when ignoring the $\log \log n$ factor and assuming the HSS rank $r$ of $A$ to be constant.

\section{Numerical experiments}\label{sec:Numerical_exp}

We have implemented Algorithm~\ref{Alg:main2} in {\sc Matlab} and have made the code freely accessible at \url{https://github.com/numpi/HSS-matfun}; this implementation will be denoted by \CKR in the following.  In our implementation, we allow for variable HSS/telescopic ranks (see Remark~\ref{remark:HSSranks}) and employ deflation criteria in the computation of orthonormal bases for rational Krylov subspaces, removing vectors that after the (re)orthogonalization step have a norm smaller than a prescribed tolerance, proportional to the required accuracy.
The threshold size $t$ of the employed HSS matrices is fixed to $256$. In our experiments, all standard operations with HSS matrices, such as matrix-vector products, have been performed using the hm-toolbox \cite{massei2020hm}. In the tables presented in this section, columns with the caption ``err'' denote the relative error in the Frobenius norm, compared with the result computed by a standard dense solver. Columns with the caption ``time'' report the observed execution time in seconds. All experiments have been executed on a server with two Intel(R) Xeon(R) E5-2650v4 CPU running at 2.20 GHz and 256 GB of RAM, using {\sc Matlab} R2021a with the Intel(R) Math Kernel Library Version 2019.0.3.

The main competitor, denoted by \CKM, is the algorithm developed in \cite{cortinovis2022divide}, in which the authors use the HSS structure of $A$ to perform a divide and conquer method for the computation of $f(A)$. The algorithm computes rational Krylov subspaces associated with (possibly large) HSS matrices, exploiting the structure in solving linear systems. The algorithm can also monitor the variation of the norm of the solution when a new pole is employed; this quantity can be used to stop the procedure if the desired accuracy is reached. We utilized an implementation of this algorithm available at \url{https://github.com/Alice94/MatrixFunctions-Banded-HSS}.

\subsection{Computation of the inverse}

Algorithm~\ref{Alg:main2} is an attractive method for computing the inverse of a symmetric positive definite HSS matrix. By Theorem~\ref{thm:error_bound}, this algorithm returns the exact inverse (at least in exact arithmetic) when employing only one zero pole.
We have tested \CKR in this situation for two different matrices.
In Table~\ref{table:invA-increasing_n}, we report the results for the inversion of the discretized Laplacian, that is,
\begin{equation}\label{eqn:Laplacian}
    A=-\frac 1 {h^2}\begin{bmatrix}
        -2 & 1 \\   1 &\ddots& \ddots \\ & \ddots & \ddots & 1 \\ & & 1 & -2
    \end{bmatrix}\in \R^{n\times n},
\end{equation}
where $h=\frac 1 {n+1}$. In Table~\ref{table:invA-banded}, we show the results obtained when inverting a more general HSS matrix (which is not banded) given by 
the Gr\"unwald-Letnikov 
finite difference discretization of the symmetric 
fractional derivative operator 
$L_{\alpha} := \frac{\partial^\alpha}{\partial x^{\alpha}}$~\cite{meerschaert2004finite}
for $\alpha = 1.5$. In contrast to~\eqref{eqn:Laplacian}, this finite 
difference approximation does not yield a sparse matrix, coherently with 
the non-local properties of fractional differential operators. It can be proven that the matrix can be 
approximated in the HSS format \cite{massei2019fast}
with an HSS rank $\mathcal O(\log n)$.

Additionally to \CKM, we also compare to the randomized algorithm introduced by Levitt and Martinsson in \cite{levitt2022linear} (denoted by {\tt LM}) based on the solution of a small number of linear systems involving $A$ and the {\tt inv} procedure for HSS matrices implemented in the hm-toolbox which is based on the ULV factorization described in~\cite{chandrasekaran2006fast} and explained in \cite[Section~4.3]{massei2020hm}.  
The HSS ranks are calculated using the \texttt{hssrank} command from \cite{massei2020hm}, employing the default tolerance of $10^{-12}$.
\begin{table}[ht]
    
    \centering
        {\begin{tabular}{|c|ccccc|}
            \hline

            $n$ & time \CKR & time \CKM & time {\tt LM} & time {ULV} & time Dense\\ 
            \hline 
            1024 & 0.09 & 0.89 & 0.30 & 0.32 & 0.02 \\ 
            2048 & 0.11 & 1.21 & 0.33 & 0.35 & 0.08  \\ 
            4096 & 0.14 & 2.72 & 0.71 & 0.63 & 0.30  \\ 
            8192 & 0.27 & 6.94 & 2.03 & 0.91 & 1.12  \\ 
            16384 & 0.66 & 16.59 & 4.84 & 1.84 &   \\ 
            32768 & 1.18 & 37.91 & 11.98 & 4.20 &    \\ 
            
\hline
 \end{tabular}
 \bigskip

 \begin{tabular}{|c|cccc|}
    \hline

    $n$ & err \CKR & err \CKM & err {\tt LM} & err {ULV}\\ 
    \hline 
   1024 & 7.56e-13 & 7.74e-12 & 7.98e-12 & 8.03e-12 \\ 
   2048 & 6.15e-13 & 2.75e-12 & 7.29e-12 & 7.26e-12 \\ 
   4096 & 9.47e-12 & 5.86e-12 & 3.43e-12 & 3.99e-12\\ 
   8192 & 8.15e-12 & 9.59e-11 & 1.58e-10 & 1.58e-10  \\    
\hline
\end{tabular}}
\caption{Comparison of the newly proposed algorithm \CKR with \CKM, {\tt LM}, and the {\tt inv} command of the hm-toolbox based on the ULV decomposition, for computing $A^{-1}$, where $A$ is the discretized Laplacian~\eqref{eqn:Laplacian}. } \label{table:invA-increasing_n}
\end{table}

\begin{table}[t]
    \centering
        {\begin{tabular}{|cc|ccccc|}
            \hline

            $n$ & HSS rank $A$ & time \CKR & time \CKM & time {\tt LM} & time ULV & time Dense\\ 
            \hline 
            1024 & 29 & 0.11 & 0.65 & 0.31 & 0.26 & 0.08 \\ 
2048 & 32 & 0.21 & 0.84 & 0.44 & 0.38 & 0.37 \\ 
4096 & 35 & 0.36 & 2.17 & 1.06 & 0.63 & 1.72 \\ 
8192 & 37 & 0.62 & 5.48 & 2.14 & 1.41 & 11.20 \\ 
            
\hline
 \end{tabular}
 \bigskip

 \begin{tabular}{|cc|cccc|}
    \hline

    $n$ & HSS rank $A$& err \CKR & err \CKM & err {\tt LM} & err {ULV}\\ 
    \hline 
   1024 &29& 5.50e-13 & 1.11e-12 & 4.16e-13 &7.24e-12 \\ 
   2048 &32& 4.78e-13 & 3.26e-12 & 1.18e-12 &2.11e-11 \\ 
   4096 &35& 2.08e-12 & 2.75e-11 & 2.53e-12 &6.81e-11\\ 
   8192 &37& 4.99e-12 & 4.36e-11 & 9.68e-12 &1.75e-10 \\    
\hline
\end{tabular}}
\caption{
Comparison of the newly proposed algorithm \CKR with \CKM, {\tt LM}, and the {\tt inv} command of the hm-toolbox based on the ULV decomposition, for computing $A^{-1}$, 
where $A$ is the Gr\"unwald-Letnikov finite difference discretization of the 
fractional derivative of order $\alpha = 1.5$~\cite{massei2019fast,meerschaert2004finite}.} \label{table:invA-banded}
\end{table}

Although not specifically designed for matrix inversion, \CKR is always the fastest among the methods that exploit HSS structure, while attaining a comparable level of accuracy. Even the closest competitor  ULV is significantly slower, by up to a factor $3$--$4$.

\subsection{Computation of the exponential function}

To show the effectiveness of rational approximation of the exponential function, in Table~\ref{table:expm-increasing-a} we compute the matrix exponential of a tridiagonal matrix $A$, whose eigenvalues are uniformly distributed in $[-10^a,0]$, for different values of $a$. For the computation, we compare the presented method with optimal poles and \CKM with both optimal and infinity poles, in the latter case the built-in stopping criteria are employed. In Table~\ref{table:expm-laplacian} we also report the comparison between the presented method and the {\tt expm} function implemented in the hm-toolbox for the computation of $\mathrm{exp}(A)$ based on the Padè approximant, where $A$ is the discretized Laplacian defined in \eqref{eqn:Laplacian}.
\begin{table}[ht]
    \centering {
\begin{tabular}{|c|ccc|ccc|}
    \hline
    a &time \CKR &time \CKM Poly &time \CKM Rat& err \CKR & err \CKM Poly & err \CKM Rat  \\ 
    \hline 
    $0$ & 0.82 & 1.38 & 13.06& 1.04e-11 & 2.16e-10 & 1.52e-10 \\ 
    $2$ & 0.67 & 1.31 & 11.15& 2.89e-10 & 1.75e-07 & 6.11e-09 \\ 
    $4$ & 0.76 & 0.48 & 10.32& 2.50e-12 & 1.05e-03 & 1.20e-08 \\ 
    $6$ & 0.52 & 0.52 & 10.13 & 2.84e-10 & 1.03e-02 & 4.69e-11 \\ 
    $8$& 0.53 & 0.48 & 10.14  & 3.36e-08 & 1.21e+01 & 5.28e-08 \\ 
\hline
\end{tabular}}
\caption{Computation of the matrix exponential of a matrix of size 4096, whose eigenvalues are uniformly distributed in $[-10^a,0]$, for different values of $a$. The accuracy is set to $10^{-8}.$} \label{table:expm-increasing-a}
\end{table}

\begin{table}[ht]
    \centering {
\begin{tabular}{|c|cccc|}
    \hline
    n &time \CKR &time {\tt expm} & err \CKR & err {\tt expm} \\ 
    \hline 
    $1024$  & 0.33 & 2.12 & 4.58e-10 & 5.35e-04\\ 
    $2048$  & 0.59 & 3.45 & 2.01e-09 & 2.14e-03\\ 
    $4096$  & 1.02 & 7.01 & 6.16e-09 & 8.52e-03\\ 
    $8192$  & 2.03 & 14.87 & 2.75e-08 & 3.37e-02\\ 
\hline
\end{tabular}}
\caption{computation of $\mathrm{exp}(A)$ where $A$ is the discretization of the Laplacian defined in \eqref{eqn:Laplacian} of $n$ using the presented method and the routine {\tt expm} of the hm-toolbox. The accuracy is set to $10^{-8}.$} \label{table:expm-laplacian}
\end{table}

Again, our newly proposed method \CKR is significantly faster than the competitors, while resulting in comparable accuracy. Note that \CKM Poly appears to not use the correct stopping criterion for larger $a$, resulting in an unacceptably large error.

\subsection{Computation of the inverse square root}
To test the presented algorithm for the computation of the inverse square root of an HSS matrix, we consider the problem of sampling from a Gaussian Markov random field (see \cite[Section~4.2]{cortinovis2022divide}) which reduces to the computation of the inverse of the square root of a banded matrix. In Table~\ref{table:sampl-increasing_n} we compare our algorithm with optimal poles, with \CKM with extended poles (i.e., alternating $0$ and $\infty$); the latter choice of poles is the one made by the authors of \CKM for solving the presented problem: since the algorithm needs to solve possibly large linear systems, the choice of using mutually different poles can often not be the most advantageous strategy. The number of poles to employ in our method is given by \eqref{eqn:bound-markov} (which is in practice very pessimistic) and the accuracy is only used in the determination of the deflation tolerance. The termination of \CKM is due to the built-in stopping criteria. For completeness, we also approximate $f(A)$ by explicitly evaluating a rational approximation of $f$: the poles and the residuals of the rational approximation have been derived using the AAA algorithm \cite{nakatsukasa2018aaa}, and for the evaluation, the HSS structure has been exploited using the hm-toolbox \cite{massei2020hm}. In all the cases reported, the 
degree of the rational approximant constructed by AAA is 
$12$. While \CKR is still faster than \CKM for sufficiently large $n$, its advantage in terms of speed is less evident for this example. Note, however, that its error is significantly lower.

\begin{table}[ht]
    
    \centering
        { \begin{tabular}{|cc|cccc|}
            \hline
            size & HSS rank $A$ & time \CKR & time \CKM & time Rat & time Dense \\
            \hline 
            512 & 22 & 0.13 & 0.22 & 1.28 & 0.03  \\ 
1024 & 20 & 0.29 & 0.30 & 2.38 & 0.23  \\ 
2048 & 21 & 0.57 & 0.73 & 5.09 & 0.87 \\ 
4096 & 21 & 1.24 & 1.55 & 12.82 & 7.04  \\ 
8192 & 23 & 3.36 & 4.07 & 27.32 & 63.07 \\ 
16384 & 25 & 6.90 & 9.17 & &    \\ 
32768 & 28 & 13.83 & 20.05 & &   \\ 
65536 & 24 & 27.21 & 44.85 & &   \\ 
131072 & 27 & 54.50 & 104.01 & &   \\ 
                
    \hline
 \end{tabular}
 \bigskip

 \begin{tabular}{|cc|ccc|}
    \hline

    size & HSS rank $A$ & err \CKR & err \CKM & err Rat   \\ 
    \hline 
   512 & 22 & 6.67e-14 & 2.02e-09 & 1.87e-09    \\ 
   1024 & 20 & 1.32e-13 & 2.70e-09 & 6.52e-09\\
   2048 & 21 & 6.00e-11 & 3.64e-09 & 3.73e-09    \\ 
   4096 & 21 & 1.99e-13 & 3.39e-09 & 4.34e-09 \\ 
   8192 & 23  & 1.11e-13 & 3.72e-09 & 5.52e-09   \\

\hline
\end{tabular}}
 \caption{Comparison of the newly proposed algorithm \CKR (using optimal poles), \CKM with extended Krylov subspaces, and the evaluation of a rational approximation, for the computation of $f(A)$ with accuracy of $10^{-8}$, where $f(x)=1/\sqrt{x}$, and $A$ is the sampling from a Gaussian Markov random field.} \label{table:sampl-increasing_n}
\end{table}

We also show a comparison between \CKR and \CKM, both with (quasi-)optimal poles, for the approximation of the fractional Laplacian i.e., the computation of $(-A)^{-1/2}$, where $A$ is defined in \eqref{eqn:Laplacian}. In Table~\ref{table:frac-laplac-increasing_n} we compare the timing and the relative error between the presented algorithm and \CKM using in both cases $50$ optimal poles and varying the size of $A$. 

\begin{table}[ht]
    \centering
        \begin{tabular}{|c|ccc|cc|}
            \hline
            $n$ & time \CKR & time \CKM & time Dense & err \CKR & err \CKM \\ 
            \hline 
           1024 & 1.44 & 7.15 & 0.19 & 1.32e-11& 2.04e-11 \\ 
           2048 & 1.71 & 15.95 & 1.05 & 1.13e-11& 4.68e-11 \\ 
           4096 & 4.68 & 41.94 & 7.96 & 1.31e-10& 1.77e-10 \\ 
           8192 & 7.87 & 91.89 & & &  \\ 
           16384 & 16.30 & 223.16 & & &  \\
            
\hline
 \end{tabular}
        \caption{Comparison of the newly proposed algorithm \CKR with \CKM, for the computation of $f(A)$ where $f(x)=1/\sqrt{x}$,  and $A$ is the discretization of the Laplacian. In both algorithms $50$ quasi-optimal poles have been employed. } \label{table:frac-laplac-increasing_n}
\end{table}

\subsection{Computation of the sign function}
In this section, we compute $f(A)$ where $f$ is the sign function, i.e.,
\begin{equation*}
    f(x)=\begin{cases}
        \phantom{-}1 \quad   x >0,\\
        -1\quad  x\le 0.
    \end{cases}
\end{equation*}
Assuming that $A$ has both positive and negative eigenvalues (otherwise the computation of $f(A)$ is trivial) the discontinuity of the function does not allow for a reasonable rational approximation on an interval containing the eigenvalues of $A$. In particular, our convergence result from Theorem~\ref{thm:error_bound} does not apply. On the other hand, if the eigenvalues of $A$ are contained in $\mathbb{E}=[-b,-a]\cup [a,b]$, with $a,b,>0$, then the best rational approximation of the sign function on $\mathbb E$ is explicitly known in terms of elliptic functions, see \cite[Section~4.3]{petrushev2011rational}. In Table~\ref{table:sign1024}, we test the time and the accuracy of the proposed method on tridiagonal matrices whose positive eigenvalues are logarithmically distributed in the interval $[10^a,1]$ and the negative ones are given by the symmetrization with respect to the imaginary axis. We compare the results with the ones obtained by running \CKM with optimal poles and with the evaluation of the rational approximation given by the AAA algorithm \cite{nakatsukasa2018aaa}, using the routines contained in the hm-toolbox \cite{massei2020hm}.

While not covered by the theory, \CKR is clearly the best method and attains good accuracy until $a = -7$. For $a = -9$, the accuracy of all methods suffers from the fact that the eigenvalues get too close to zero.

\begin{table}
    \centering
        \begin{tabular}{|c|ccc|ccc|}
            \hline

            a & time \CKR & time \CKM & time Rat &  err \CKR & err \CKM & err Rat \\ 
            \hline 
-1 & 1.73 & 20.55 & 50.28 & 3.75e-10& 3.72e-10 & 1.73e-09  \\ 
-3 & 4.13 & 38.46 & 128.93 & 2.60e-10& 4.10e-08 & 1.51e-08  \\ 
-5 & 9.72 & 57.99 & 121.76 & 2.70e-10& 2.12e-06 & 7.49e-08  \\ 
-7 & 18.24 & 78.37 & 137.72 & 2.99e-08& 1.79e-08 & 1.51e-05  \\  
-9 & 14.08 & 43.04 & 139.65 & 7.45e-02& 7.83e-02 & 3.98e-02  \\ 
\hline
 \end{tabular}
        \caption{Computation of $\mathrm{sign}(A)$, where $A$ is a tridiagonal matrix of size 4096 with logarithmically spaced eigenvalues, symmetric with respect to the imaginary axis contained in $[-1,-10^a]\cup [10^a,1]$. } \label{table:sign1024}
\end{table}

\section{Conclusions}

Generalizing the definition of telescopic decompositions, we have linked different representations of HSS matrices used in the literature, providing ways to convert between them. Exploiting the nested low-rank structure of telescopic decompositions, we have developed a novel algorithm that computes an approximation of $f(A)$ for a symmetric HSS matrix $A$.
Our convergence results imply nearly linear complexity for matrix exponentials and linear-polylogarithmic complexity for inverse square roots in situations of practical relevance.
This favorable complexity is attained by using rational Krylov subspaces that involve small-sized matrices only, avoiding the solution of potentially large linear systems usually associated with rational Krylov subspace techniques. Several numerical experiments show that our newly proposed algorithm is faster than existing algorithms for a variety of examples previously reported in the literature. Somewhat surprisingly, it even appears to be the method of choice for computing matrix inverses. A number of questions remain open. This includes the extension to nonsymmetric matrices as well as a theoretical explanation of the good results obtained for the sign function.

\section*{Acknowledgments}
The first author is grateful to the ANCHP group of the Ecole
Polytechnique Fédérale de Lausanne for their warm hospitality. The first and third authors are both members of the research group INdAM-GNCS.
The third author has been supported by 
the National Research Center in High Performance Computing, Big Data and Quantum Computing (CN1 -- Spoke 6), and acknowledges the MIUR Excellence Department Project awarded to the Department of Mathematics, University of Pisa, CUP I57G22000700001.

\bibliographystyle{plain}
    \bibliography{biblio}

\begin{thebibliography}{10}

\bibitem{beckermann2021low}
Bernhard Beckermann, Alice Cortinovis, Daniel Kressner, and Marcel Schweitzer.
\newblock Low-rank updates of matrix functions {II}: rational {K}rylov methods.
\newblock {\em SIAM J. Numer. Anal.}, 59(3):1325--1347, 2021.

\bibitem{beckermann2009error}
Bernhard Beckermann and Lothar Reichel.
\newblock Error estimates and evaluation of matrix functions via the {F}aber transform.
\newblock {\em SIAM J. Numer. Anal.}, 47(5):3849--3883, 2009.

\bibitem{benzi2015decay}
Michele Benzi and Valeria Simoncini.
\newblock Decay bounds for functions of {H}ermitian matrices with banded or {K}ronecker structure.
\newblock {\em SIAM J. Matrix Anal. Appl.}, 36(3):1263--1282, 2015.

\bibitem{chandrasekaran2006fast}
Shiv Chandrasekaran, Ming Gu, and Timothy Pals.
\newblock A fast {ULV} decomposition solver for hierarchically semiseparable representations.
\newblock {\em SIAM J. Matrix Anal. Appl.}, 28(3):603--622, 2006.

\bibitem{cortinovis2022divide}
Alice Cortinovis, Daniel Kressner, and Stefano Massei.
\newblock Divide-and-conquer methods for functions of matrices with banded or hierarchical low-rank structure.
\newblock {\em SIAM J. Matrix Anal. Appl.}, 43(1):151--177, 2022.

\bibitem{elsworth2020block}
Steven Elsworth and Stefan G\"{u}ttel.
\newblock The block rational {A}rnoldi method.
\newblock {\em SIAM J. Matrix Anal. Appl.}, 41(2):365--388, 2020.

\bibitem{frommer2021analysis}
Andreas Frommer, Claudia Schimmel, and Marcel Schweitzer.
\newblock Analysis of probing techniques for sparse approximation and trace estimation of decaying matrix functions.
\newblock {\em SIAM J. Matrix Anal. Appl.}, 42(3):1290--1318, 2021.

\bibitem{Gonchar1987}
A.~A. Gonchar and E.~A. Rakhmanov.
\newblock Equilibrium distributions and the rate of rational approximation of analytic functions.
\newblock {\em Mat. Sb. (N.S.)}, 134(176)(3):306--352, 447, 1987.

\bibitem{Grasedyck2003}
Lars Grasedyck, Wolfgang Hackbusch, and Boris~N. Khoromskij.
\newblock Solution of large scale algebraic matrix {R}iccati equations by use of hierarchical matrices.
\newblock {\em Computing}, 70(2):121--165, 2003.

\bibitem{guttel2010rational}
Stefan G{\"u}ttel.
\newblock {\em Rational {K}rylov methods for operator functions}.
\newblock PhD thesis, Technische Universit\"at Bergakademie Freiberg, 2010.

\bibitem{guttel2013rational}
Stefan G{\"u}ttel.
\newblock Rational {K}rylov approximation of matrix functions: Numerical methods and optimal pole selection.
\newblock {\em GAMM-Mitteilungen}, 36(1):8--31, 2013.

\bibitem{hackbusch2015hierarchical}
Wolfgang Hackbusch.
\newblock {\em Hierarchical matrices: algorithms and analysis}, volume~49 of {\em Springer Series in Computational Mathematics}.
\newblock Springer, Heidelberg, 2015.

\bibitem{halikiasstructured}
Diana Halikias and Alex Townsend.
\newblock Structured matrix recovery from matrix-vector products.
\newblock {\em Numer. Linear Algebra Appl.}, 31(1):Paper No. e2531, 27, 2024.

\bibitem{higham2008functions}
Nicholas~J. Higham.
\newblock {\em Functions of matrices}.
\newblock SIAM, Philadelphia, PA, 2008.

\bibitem{higham2005scaling}
Nicholas~J. Higham.
\newblock The scaling and squaring method for the matrix exponential revisited.
\newblock {\em SIAM Rev.}, 51(4):747--764, 2009.

\bibitem{Hochbruck1997}
Marlis Hochbruck and Christian Lubich.
\newblock On {K}rylov subspace approximations to the matrix exponential operator.
\newblock {\em SIAM J. Numer. Anal.}, 34(5):1911--1925, 1997.

\bibitem{KressnerSus2017}
Daniel Kressner and Ana \v{S}u\v{s}njara.
\newblock Fast computation of spectral projectors of banded matrices.
\newblock {\em SIAM J. Matrix Anal. Appl.}, 38(3):984--1009, 2017.

\bibitem{levitt2022linear}
James Levitt and Per-Gunnar Martinsson.
\newblock Linear-complexity black-box randomized compression of hierarchically block separable matrices.
\newblock {\em arXiv preprint arXiv:2205.02990}, 2022.

\bibitem{martinsson2011fast}
Per-Gunnar Martinsson.
\newblock A fast randomized algorithm for computing a hierarchically semiseparable representation of a matrix.
\newblock {\em SIAM J. Matrix Anal. Appl.}, 32(4):1251--1274, 2011.

\bibitem{massei2019fast}
Stefano Massei, Mariarosa Mazza, and Leonardo Robol.
\newblock Fast solvers for two-dimensional fractional diffusion equations using rank structured matrices.
\newblock {\em SIAM J. Sci. Comput.}, 41(4):A2627--A2656, 2019.

\bibitem{massei2020hm}
Stefano Massei, Leonardo Robol, and Daniel Kressner.
\newblock hm-toolbox: {MATLAB} software for {HODLR} and {HSS} matrices.
\newblock {\em SIAM J. Sci. Comput.}, 42(2):C43--C68, 2020.

\bibitem{meerschaert2004finite}
Mark~M. Meerschaert and Charles Tadjeran.
\newblock Finite difference approximations for fractional advection-dispersion flow equations.
\newblock {\em J. Comput. Appl. Math.}, 172(1):65--77, 2004.

\bibitem{nakatsukasa2018aaa}
Yuji Nakatsukasa, Olivier S\`ete, and Lloyd~N. Trefethen.
\newblock The {AAA} algorithm for rational approximation.
\newblock {\em SIAM J. Sci. Comput.}, 40(3):A1494--A1522, 2018.

\bibitem{park2023approximating}
Taejun Park and Yuji Nakatsukasa.
\newblock Approximating sparse matrices and their functions using matrix-vector products.
\newblock {\em arXiv preprint arXiv:2310.05625}, 2023.

\bibitem{petrushev2011rational}
Penco~Petrov Petrushev and Vasil~Atanasov Popov.
\newblock {\em Rational approximation of real functions}, volume~28 of {\em Encyclopedia of Mathematics and its Applications}.
\newblock Cambridge University Press, Cambridge, 1987.

\bibitem{xia2010fast}
Jianlin Xia, Shivkumar Chandrasekaran, Ming Gu, and Xiaoye~S. Li.
\newblock Fast algorithms for hierarchically semiseparable matrices.
\newblock {\em Numer. Linear Algebra Appl.}, 17(6):953--976, 2010.

\end{thebibliography}

\end{document}